\documentclass[11pt]{article}
\usepackage{paper}

\usepackage{relsize}
\usepackage{tikz,pgfplots}
\usepackage[mathscr]{euscript}

\title{A Statistical Learning Assessment of Huber~Regression}

\author[1]{Yunlong Feng}
\author[2]{Qiang Wu}
\affil[1]{Department of Mathematics and Statistics, University at Albany}
\affil[2]{Department of Mathematical Sciences, Middle Tennessee State University}

\date{}

\begin{document}
\maketitle
    
\begin{abstract}
\noindent As one of the triumphs and milestones of robust statistics, Huber regression plays an important role in robust inference and estimation. It has also been finding a great variety of applications in machine learning. In a parametric setup, it has been extensively studied. However, in the statistical learning context where a function is typically learned in a nonparametric way, there is still a lack of theoretical understanding of how Huber regression estimators learn the conditional mean function and why it works in the absence of light-tailed noise assumptions. To address these fundamental questions, this paper conducts an assessment of Huber regression from a statistical learning viewpoint. First, we show that the usual risk consistency property of Huber regression estimators, which is usually pursued in machine learning, cannot guarantee their learnability in mean regression. Second, we argue that Huber regression should be implemented in an adaptive way  to perform mean regression, implying that one needs to tune the scale parameter in accordance with the sample size and the moment condition of the noise. Third, with an adaptive choice of the scale parameter, we demonstrate that Huber regression estimators can be asymptotic mean regression calibrated under $(1+\epsilon)$-moment conditions ($\epsilon>0$) on the conditional distribution. Last but not least, under the same moment conditions, we establish almost sure convergence rates for Huber regression estimators. Note that the $(1+\epsilon)$-moment conditions accommodate the special case where the response variable possesses infinite variance and so the established convergence rates justify the robustness feature of Huber regression estimators. In the above senses, the present study provides a systematic statistical learning assessment of Huber regression estimators and justifies their merits in terms of robustness from a theoretical viewpoint. 
\end{abstract}

\section{Introduction and Motivation}
In this paper, we are concerned with the robust regression problem where one aims at seeking a functional relation between input and output when the response variable may be heavy-tailed \cite{huber2009robust,rousseeuw2005robust,maronna2006robust,hampel2011robust}. In such scenarios, the traditionally frequently used least squares regression paradigms may not work well due to the amplification of the least squares loss to large residuals. As an alternative, the Huber loss was proposed in the seminal work \cite{huber1964robust} in the context of robust estimation of location parameters. The Huber loss and the theoretical findings in location parameter estimation then applied and carried over to robust regression problems. The regression paradigm that is associated with the Huber loss is termed as \textit{Huber regression} and the resulting estimator is termed as the \textit{Huber regression estimator}. The introduction of Huber regression led to the development of various subsequent M-estimators and fostered the development of robust statistics into a discipline.  

Denoting $X$ as the input variable that takes values in a compact metric space $\mathcal{X}\subset\mathbb{R}^d$ and $Y$ the response variable taking values in $\mathcal{Y}\subset\mathbb{R}$, given i.i.d observations $\mathbf{z}=\{(x_i,y_i)\}_{i=1}^n$, in the context of parametric regression, the Huber regression estimator $f_{\mathbf{z},\sigma}(X)=X^\top \hat{\beta}$ is learned from the following empirical risk minimization (ERM) scheme 
\begin{align}\label{Huber_ERM}
f_{\mathbf{z},\sigma}:=\arg\min_{f\in\mathcal{H}}\frac{1}{n}\sum_{i=1}^n \ell_\sigma(y_i-f(x_i)),
\end{align} 
where $\mathcal{H}$ is the function space from $\mathcal{X}$ to $\mathbb{R}$ consisting of linear functions of the form $f(x)=x^\top\beta$ and $\ell_\sigma$ is the well-known Huber loss defined by
\begin{align}\label{huber_loss} 
\ell_\sigma(t)=
\begin{cases}
t^2, &\quad \hbox{if}\,\, |t|\leq \sigma, \\
2\sigma |t| - \sigma^2,&\quad \hbox{otherwise}.
\end{cases}
\end{align}
Assuming that the conditional mean function $f^\star(X)=\mathbb{E}(Y|X)$ can be parametrically represented as  $f^\star(X)=X^\top\beta^\star$ and the noise $Y-f^\star(X)$ is zero mean when conditioned on $X$, asymptotic properties of $\hat{\beta}$ and its convergence to $\beta^\star$ have been extensively studied in the literature of parametric statistics. An incomplete list of related literature includes \cite{huber1973robust,yohai1979asymptotic,portnoy1984asymptotic,he1996general,maronna2006robust,huber2009robust,rousseeuw2005robust,hampel2011robust,loh2017statistical} and many references therein. Note that in the aforementioned studies, the scale parameter $\sigma$ in the Huber loss is set to be fixed and chosen according to the $95\%$ asymptotic efficiency rule. In a high-dimensional setting, Huber regression with a fixed scale parameter, however, may not be able to learn $\beta^\star$ when the noise is asymmetric, as argued recently in \cite{sun2019adaptive,fan2019adaptive}. There the authors proposed to choose the scale parameter by relating it to the dimension of the input space, the moment condition of the noise distribution, and the sample size so that one may debias the resulting regression estimator and the scale parameter can play a trade-off role between bias and robustness. 

In a nonparametric statistical learning context where functions in $\mathcal{H}$, in general, do not admit parametric representations, theoretical investigations of Huber regression estimators are still sparse though they have been applied extensively into various applications where robustness is a concern. To proceed with our discussion, denote $\mathcal{H}$ as a compact subset of the space $C(\mathcal{X})$ of continuous functions on $\mathcal{X}$, $\rho$ the underlying unknown distribution over $\mathcal{X}\times\mathcal{Y}$, and $\mathcal{R}^\sigma(f)$ the \textit{generalization error} of $f:\mathcal{X}\rightarrow\mathbb{R}$ defined by
\begin{align*}
\mathcal{R}^\sigma(f)=\mathbb{E}\ell_\sigma(Y-f(X)),
\end{align*} 
    where the expectation is taken jointly with respect to $X$ and $Y$. Recall that the objective is to learn the conditional mean function $f^\star(X)=\mathbb{E}(Y|X)$ robustly. Existing studies in the literature of statistical learning theory remind that Huber regression estimators are $\mathcal{R}^\sigma$-risk consistent, i.e., $\mathcal{R}^\sigma(f_{\mathbf{z},\sigma})\rightarrow \min_{f\in\mathcal H} \mathcal{R}^\sigma(f)$ as $n\rightarrow \infty$. To see this, one simply notes that the Huber loss is Lipschitz continuous on $\mathbb{R}$ and so the usual learning theory arguments may apply \cite{bartlett2006empirical,friedman2001elements,rosasco2004loss,christmann2007consistency, cucker2007learning}. However, such a risk consistency property neither says anything about the role that the scale parameter $\sigma$ plays nor implies the convergence of the regression estimator $f_{\mathbf{z},\sigma}$ to the mean regression function $f^\star$.
    
In the present study, we aim to conduct a statistical learning assessment of the Huber regression estimator $f_{\mathbf{z},\sigma}$. More specifically, we pursue answers to the following fundamental questions:
\begin{itemize}
	\item \textbf{Question 1}: Whether $\mathcal{R}^\sigma$-risk consistency implies the convergence of $f_{\mathbf{z},\sigma}$ to $f^\star$?   
	\item \textbf{Question 2}: What is the role that $\sigma$ plays when learning $f^\star$ through ERM \eqref{Huber_ERM}?
	\item \textbf{Question 3}: How to develop exponential-type fast convergence rates of $f_{\mathbf{z},\sigma}$ to $f^\star$?
	\item \textbf{Question 4}: How to justify the learnability of  $f_{\mathbf{z},\sigma}$ in the absence of light-tail noise?
\end{itemize}

Answers to these questions represent our main contributions. In particular, if $\mathcal{R}^\sigma$-risk consistency implies the convergence of $f_{\mathbf{z},\sigma}$ to $f^\star$, we say that Huber regression \eqref{Huber_ERM} is mean regression calibrated. We show that Huber regression is generally not mean regression calibrated for any fixed scale parameter $\sigma.$ Instead, it should be implemented in an adaptive way in order to perform mean regression, where the adaptiveness refers to the dependence of the scale parameter on the sample size and the moment condition. We also show that the scale parameter needs to diverge in accordance with the sample size to ensure that the Huber regression estimator $f_{\mathbf{z},\sigma}$ learns the mean regression function $f^\star$, which we term as the asymptotic mean regression calibration property. Furthermore, such an asymptotic mean regression calibration property can be  established under $(1+\epsilon)$-moment conditions ($\epsilon>0$) on the conditional distribution. This is a rather weak condition as it admits the case where the conditional distribution possesses infinite variance. To develop fast exponential-type convergence rates, we establish a relaxed Bernstein condition. The idea is to bound the second moment of associated random variables by using their first moment and an additional bias term that diminishes towards $0$ when the sample size tends to infinity. These preparations allow us to establish fast exponential-type convergence rates for $f_{\mathbf{z},\sigma}$. Interestingly, but not surprisingly, it is shown that $\sigma$ plays a trade-off role between bias and learnability, and the convergence rates of $f_{\mathbf{z},\sigma}$ depend on the order of the imposed moment conditions.

The rest of this paper is organized as follows. In Section~\ref{sec::risk_consistency_insufficient}, we argue that risk consistency is insufficient in guaranteeing learnability and so does not necessarily imply the convergence of Huber regression estimators to the mean regression function. In Section~\ref{sec::mean_calibration}, we demonstrate that Huber regression is asymptotically mean regression calibrated if the scale parameter is chosen in a diverging manner in accordance to the sample size and the moment condition. Some efforts are then made in Section~\ref{sec::realxing_Bernstein} to develop fast exponential-type convergence rates by relaxing the standard Bernstein condition in learning theory. In Section~\ref{sec::convergence_rates}, we establish fast convergence rates for Huber regression estimators under weak moment conditions. Proofs of Theorems are collected in Section~\ref{sec::proofs}. The paper is concluded in Section~\ref{sec:conclusion}.

\smallskip 
\noindent \textbf{Notation and Convention}. Throughout this paper, we assume that  $f^\star$ is bounded and $\mathcal H\subset C(\mathcal X)$ is uniformly bounded and   denote $M=\max\{\|f^\star\|_\infty,\sup_{f\in\mathcal{H}}\|f\|_\infty\}.$ Denoting  $\rho_\mathsmaller{\mathcal{X}}$ as the marginal distribution of $\rho$ on $\mathcal{X}$, then $\|\cdot\|_{2,\rho}$ defines the $L_2$-norm induced by $\rho_\mathsmaller{\mathcal{X}}$. The notation $a \lesssim b$ denotes the fact that there exists an absolute positive constant $c$ such that $a\leq c b$. For any $t\in\mathbb R$, let $t_+=\max(0, t).$ All proofs are deferred to the appendix.

\section{Risk Consistency is Insufficient in Guaranteeing Learnability}\label{sec::risk_consistency_insufficient}
In this section, we shall make efforts to answer Question 1 listed in the introduction by arguing that risk consistency is insufficient in guaranteeing learnability of the Huber regression estimator, where by \textit{risk consistency} we refer to the convergence of $\mathcal{R}^\sigma(f_{\mathbf{z},\sigma})$ to $\min_{f\in\mathcal H} \mathcal{R}^\sigma(f)$ while \textit{learnability} refers to the convergence of $f_{\mathbf{z},\sigma}$ to $f^\star$.

Following existing studies on empirical risk minimization schemes induced by convex loss functions, it is easy to deduce that $f_{\mathbf{z},\sigma}$ is $\mathcal{R}^\sigma$-risk consistent for any fixed $\sigma$ value. Moreover, under certain mild assumptions, probabilistic convergence rates may also be established. To see this, note that the deduction of the risk consistency property of Huber regression estimators as well as their convergence rates involves the following set of random variables 
\begin{align*}
{\mathcal{G}}_\mathcal{H}=\Big\{\xi_f\,|\, \xi_f:=\ell_\sigma(y-f(x))-\ell_\sigma(y-f_{\mathcal H, \sigma} (x)),\,f\in\mathcal{H}, (x,y)\in\mathcal{X}\times\mathcal{Y}\Big\},
\end{align*}
where $f_{\mathcal H,\sigma} = \arg\min_{f\in\mathcal H} \mathcal R^\sigma(f)$ is the population version of $f_{\mathbf{z},\sigma}.$ Notice that the Huber loss $\ell_\sigma$ in \eqref{huber_loss}  is Lipschitz continuous on $\mathbb{R}$ with Lipschitz constant $2\sigma$. Therefore, the random variables in ${\mathcal{G}}_\mathcal{H}$ and their variances can be  uniformly upper bounded by constants involving $\sigma$. Applying learning theory arguments and concentration inequalities to $\mathcal{G}_\mathcal{H}$, under mild assumptions, convergence rates can be derived. However, due to the dependence of $f_{\mathbf{z},\sigma}$ on the scale parameter $\sigma$, it may possess much flexibility and can be quite different with different choices of the $\sigma$ values. Consequently, the $\mathcal{R}^\sigma$-risk consistency property as well as the convergence rates of $\mathcal{R}^\sigma(f_{\mathbf{z},\sigma})-\min_{f\in\mathcal H} \mathcal{R}^\sigma(f)$ may not be informative and may not indicate the learnability of  $f_{\mathbf{z},\sigma}$ in learning $f^\star$ even if $\mathcal H$ is perfectly chosen such that  $f^\star\in\mathcal H$.   

\begin{figure}[ht]  
	\tikzset{trim left=0.0 cm}
	\input{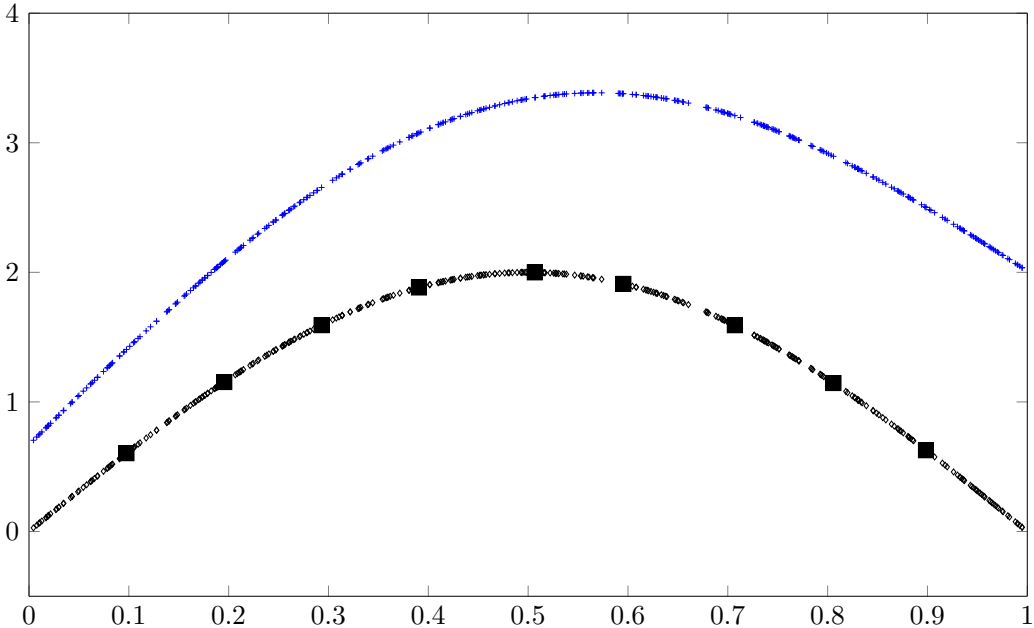}
	\caption{The bottom black curve with square marks gives the conditional mean function. The top blue curve represents the learned Huber regression estimator with 
		$\sigma=0.01$.}\label{toy_illustration_sigma}
\end{figure} 

To illustrate  this phenomenon numerically, consider a toy example with the model $$Y=2\sin(\pi X)+(1+2X)\varepsilon,$$ where $X$ follows a uniform distribution on $[0,1]$ and $\varepsilon\sim 0.5N(0,2.5^2)+0.5N(0,0.5^2)$. It is apparent that the noise distribution admits zero mean and is skewed. Simple calculation shows that for this regression model, the conditional mean function  $f^\star(X)=2\sin(\pi X)$. In this experiment, we visualize the function $f_{\mathbf{z},\sigma}$ and compare it with $f^\star$. We choose the hypothesis space $\mathcal{H}$ as a ball of the reproducing kernel Hilbert space associated with the Gaussian kernel $K(x_i,x_j)=\exp\{-\|x_i-x_j\|^2/h^2\}$. Both the kernel bandwidth $h$ and the radius of the ball are tuned via cross-validation under the least absolute deviation error criterion while the scale parameter $\sigma$ in the Huber loss is set to be fixed with $\sigma=0.01$. A set of independent observations are sampled from the above regression model and are used as the training data. Then $f_{\mathbf{z},\sigma}$ is plotted in Figure \ref{toy_illustration_sigma}. The conditional mean function is also plotted for comparison. As discussed earlier, due to the Lipschitz continuity of the Huber loss, with the choice of $\sigma=0.01$, the risk consistency can be guaranteed. However, from the plots in Figure \ref{toy_illustration_sigma}, clearly, $f_{\mathbf{z},\sigma}$ does not approach the conditional mean function. 

The fact that $\mathcal{R}^\sigma$-risk consistency of Huber regression estimators cannot guarantee their ability to learn the conditional mean function can be further justified through the following example. Let $\mathcal{M}$ be the space of measurable functions from $\mathcal{X}$ to $\mathbb{R}$ and define
\begin{align}\label{oracle}
f_\sigma:=\arg\min_{f\in\mathcal{M}}\mathcal{R}^\sigma(f).
\end{align}  
Intuitively, $f_\sigma$ can be regarded as the best Huber regression estimator learned in an ideal case where infinite observations are available and the hypothesis space is perfectly selected. 

\begin{example}\label{prop_impossibility}
	Consider the Huber regression problem where one aims to learn the conditional mean function $f^\star$ from the following homoscedastic regression model
	\begin{align*}
	Y=f^\star(X)+\varepsilon,
	\end{align*}
	where $\varepsilon$ is the zero-mean noise variable with density 
	\begin{align*}
	p_{\varepsilon}(t)=
	\begin{cases}
	\frac{1}{2}e^{-(t+\frac{1}{4})}, & \hbox{ if } t\geq -\frac{1}{4},\\[1ex]
	e^{2(t+\frac{1}{4})}, & \hbox{ if } t<-\frac{1}{4}.
	\end{cases}
	\end{align*}
	Then there exists a constant $c$ with $c\neq 0$ such that $f_\sigma(x)=f^\star(x)+c$ for all $x\in\mathcal{X}$.
	As a result, if $f_\sigma\in\mathcal H$ and 
 $\mathcal R^\sigma(f_{\mathbf z, \sigma})$ converges to $\min_{f\in \mathcal H}\mathcal R^\sigma(f)$ as $n\to \infty$ with large probability, then $f_{\mathbf z, \sigma} $ 
 does not converge to $f^\star$ with large probability.
\end{example}

\begin{proof}
Recalling the definition of $f_\sigma$ in \eqref{oracle}, for any $x\in\mathcal{X}$, we can re-express it as follows
	\begin{align*}
	f_\sigma(x)&=\arg\min_{\nu\in\mathbb{R}}\int_{\mathbb{R}}\ell_\sigma(t-\nu)p_{\mathsmaller{Y|X=x}}(t)\mathrm{d}t\\
	&=\arg\min_{\nu\in\mathbb{R}}\int_{\mathbb{R}}\ell_\sigma(t-\nu)p_{\mathsmaller{\varepsilon}}(t-f^\star(x))\mathrm{d}t\\
	&=\arg\min_{\nu\in\mathbb{R}}\int_{\mathbb{R}}\ell_\sigma(t-\nu)p_{\mathsmaller{\varepsilon}}(t-f^\star(x))\mathrm{d}t\\
	&=\arg\min_{\nu\in\mathbb{R}}\int_{\mathbb{R}}\ell_\sigma(u-(\nu-f^\star(x)))p_{\mathsmaller{\varepsilon}}(u)\mathrm{d}u.
	\end{align*}
	Therefore, for any $x\in\mathcal{X}$, we have
	\begin{align*}
	f_\sigma(x)-f^\star(x)=\arg\min_{\nu\in\mathbb{R}}\int_{\mathbb{R}}\ell_\sigma(u-\nu)p_{\mathsmaller{\varepsilon}}(u)\mathrm{d}u.
	\end{align*} 
	The assumption that the noise $\varepsilon$ is independent of $x$ tells us that 
\begin{align}\label{equ_minimize} 
\arg\min_{\nu\in\mathbb{R}}\int_{\mathbb{R}}\ell_\sigma(u-\nu)p_{\mathsmaller{\varepsilon}}(u)\mathrm{d}u 
\end{align} 
is a unique constant for all $x\in\mathcal{X}$. To prove the first part of the assertion, we only need to show that $0$ is not a solution to the above minimization problem.

	From the definition of the Huber loss \eqref{huber_loss}, we know that
	\begin{align*}
	\int_{\mathbb{R}}\ell_\sigma(u-\nu)p_{\mathsmaller{\varepsilon}}(u)\mathrm{d}u&=\int_{\mathbb{R}}\ell_\sigma(u-\nu)p_{\mathsmaller{\varepsilon}}(u)\mathrm{d}u+\int_{\mathbb{R}}(u-\nu)^2p_{\mathsmaller{\varepsilon}}(u)\mathrm{d}u-\int_{\mathbb{R}}(u-\nu)^2p_{\mathsmaller{\varepsilon}}(u)\mathrm{d}u\\[1ex]
	&=\int_{\mathbb{R}}(u-\nu)^2p_{\mathsmaller{\varepsilon}}(u)\mathrm{d}u+\int_{|u-\nu|\geq \sigma}(2\sigma|u-v|-\sigma^2)p_{\mathsmaller{\varepsilon}}(u)\mathrm{d}u\\[1ex]
	&\hspace{3.6cm}-\int_{|u-\nu|\geq \sigma}(u-v)^2p_{\mathsmaller{\varepsilon}}(u)\mathrm{d}u\\
	&=\int_{\mathbb{R}}(u-\nu)^2p_{\mathsmaller{\varepsilon}}(u)\mathrm{d}u-\int_{|u-\nu|\geq \sigma}(|u-\nu|-\sigma)^2p_{\mathsmaller{\varepsilon}}(u)\mathrm{d}u\\[1ex]
	&=\int_{\mathbb{R}}(u-\nu)^2p_{\mathsmaller{\varepsilon}}(u)\mathrm{d}u-\int_{u-\nu\geq \sigma}(u-\nu-\sigma)^2p_{\mathsmaller{\varepsilon}}(u)\mathrm{d}u\\[1ex]
	&\hspace{3.6cm}-\int_{u-\nu\leq -\sigma}(u-\nu+\sigma)^2p_{\mathsmaller{\varepsilon}}(u)\mathrm{d}u.
	\\[1ex]
	\end{align*} 
Therefore, we have
\begin{align*}
\frac{\mathrm{d}(\int_{\mathbb{R}}\ell_\sigma(u-\nu)p_{\mathsmaller{\varepsilon}}(u)\mathrm{d}u)}{\mathrm{d}\nu}
=-2\int_{\mathbb{R}}(u-\nu)p_{\mathsmaller{\varepsilon}}(u)\mathrm{d}u
&+2 \int_{\nu+\sigma}^{+\infty}(u-\nu-\sigma) p_{\mathsmaller{\varepsilon}}(u) \mathrm{d}u\\[1ex]
& + 2 \int_{-\infty}^{\nu-\sigma} (u-\nu+\sigma)p_{\mathsmaller{\varepsilon}}(u) \mathrm{d}u.
\end{align*}
The zero-mean noise assumption tells us that 
\begin{align*}
\frac{\mathrm{d}(\int_{\mathbb{R}}\ell_\sigma(u-\nu)p_{\mathsmaller{\varepsilon}}(u)\mathrm{d}u)}{\mathrm{d}\nu}\Big |_{\nu=0}= 
2\int_{\sigma}^{+\infty}(u-\sigma)p_{\mathsmaller{\varepsilon}}(u) \mathrm{d}u 
+2\int_{-\infty}^{-\sigma}(u+\sigma)  p_{\mathsmaller{\varepsilon}}(u)\mathrm{d}u.
\end{align*}  
If $\sigma\ge \frac 1 4$, then we have
\begin{align*}
 & 2\int_{\sigma}^{+\infty}(u-\sigma)p_{\mathsmaller{\varepsilon}}(u) \mathrm{d}u 
+2\int_{-\infty}^{-\sigma}(u+\sigma)  p_{\mathsmaller{\varepsilon}}(u)\mathrm{d}u \\[1ex]
=\  &  \int_{\sigma} ^{+\infty} (u-\sigma) e^{-(u+\frac 14)} \mathrm{d}u 
+ 2\int_{-\infty}^{-\sigma} (u+\sigma) e^{2(u+\frac 14)}\mathrm{d}u \\[1ex]
=\  & e^{-\sigma-\frac{1}{4}}-\frac{1}{2}e^{\frac{1}{2}-2\sigma}>0,
\end{align*}
where we used the fact that $g(a)=a-\frac e2 a^2$ is positive for $a\in (0, \frac 2e)$ and $0<e^{-\frac 14 -\sigma} \le e^{-\frac 12} <\frac 2 e.$ If $0<\sigma< \frac 1 4$, then we have
\begin{align*} 
 & 2\int_{\sigma}^{+\infty}(u-\sigma)p_{\mathsmaller{\varepsilon}}(u) \mathrm{d}u 
+2\int_{-\infty}^{-\sigma}(u+\sigma)  p_{\mathsmaller{\varepsilon}}(u)\mathrm{d}u \\[1ex]
=\  &  \int_{\sigma} ^{+\infty} (u-\sigma) e^{-(u+\frac 14)} \mathrm{d}u 
+ \int_{-\frac 14}^{-\sigma} (u+\sigma) e^{-(u+\frac 14)} \mathrm{d}u 
+ 2\int_{-\infty}^{-\frac 14} (u+\sigma) e^{2(u+\frac 14)}\mathrm{d}u \\[1ex]
=\ & 2\sigma + e^{-\sigma-\frac{1}{4}} - e^{\sigma-\frac{1}{4}}>0,
\end{align*}
where we used the fact that the function 
$g(\sigma)= 2\sigma + e^{-\sigma-\frac{1}{4}} - e^{\sigma-\frac{1}{4}}$ has $g(0)=0$ and is strictly increasing.

Therefore, $\frac{\mathrm{d}(\int_{\mathbb{R}}\ell_\sigma(u-\nu)p_{\mathsmaller{\varepsilon}}(u)\mathrm{d}u)}{\mathrm{d}\nu}\Big |_{\nu=0}$ is non-zero for fixed $\sigma>0$, which implies that $0$ is not a solution to the minimization problem \eqref{equ_minimize}. This proves the first claim  in Example \ref{prop_impossibility}.

To prove the divergence of $f_{\mathbf{z}, \sigma}$ to $f^\star$ for any fixed $\sigma$,
first note that, by the convexity of the Huber loss, $f_\sigma$ is the unique minimizer of $\mathcal R^\sigma(f)$. Since $f_\sigma = f^\star +c \not = f^\star$, 
we have $\mathcal R^\sigma(f_\sigma) \not= \mathcal R^\sigma(f^\star).$
If $f_\sigma\in \mathcal H$ and $\mathcal R^\sigma(f_{\mathbf z,\sigma})$ 
converges to $\min_{f\in\mathcal H} \mathcal R^\sigma(f) = \mathcal R^\sigma (f_\sigma)$ with large probability,
then there exists some large enough $N$ such that 
for any $n>N$,
$$ | \mathcal R^\sigma(f_{\mathbf z,\sigma}) -
\mathcal R^\sigma (f_\sigma) | \le \frac 1 2 
\left| \mathcal R^\sigma(f_\sigma) - \mathcal R^\sigma(f^\star) \right|$$
holds with large probability. By the Lipschitz property of the Huber loss, we have 
\begin{align*}
\|f_{\mathbf z, \sigma} - f^\star\|_{2,\rho}
& \ge \frac 1 {2\sigma} \left| \mathcal R^\sigma(f_{\mathbf z, \sigma}) - \mathcal R^\sigma(f^\star)\right|  \\
& \ge \frac 1 {2\sigma } \Big( 
\left| \mathcal R^\sigma(f_\sigma) -
\mathcal R^\sigma(f^\star)  \right| 
- \left| \mathcal R^\sigma(f_{\mathbf z, \sigma}) -\mathcal R^\sigma(f_\sigma)  \right|\Big) \\
& \ge \frac 1 {4\sigma} \left| \mathcal R^\sigma(f_\sigma) -
\mathcal R^\sigma(f^\star)  \right|. 
\end{align*}
This proves that $f_{\mathbf z, \sigma}$
does not converge to $f^\star$ for any fixed $\sigma.$
\end{proof}

Example \ref{prop_impossibility} tells us that in some scenarios,  $f_{\mathbf{z},\sigma}$ may not converge to the conditional mean function which one aims to learn even if infinite samples are given and the hypothesis space is perfectly chosen. This is due to the inherent bias brought by the integrated scale parameter in the Huber loss when pursuing robustness. Continuing our discussion at the beginning of this section, the general answer to Question 1 is that, $\mathcal{R}^\sigma$-risk consistency cannot guarantee its learnability as the gain in robustness may entail a bias. That is, in general, the Huber regression scheme \eqref{Huber_ERM} may not be mean regression calibrated. To address this problem and to learn the conditional mean function $f^\star$ through Huber regression, one needs to tune the scale parameter $\sigma$ to reduce the bias and learn in an adaptive way, as argued in the next section.       

\section{Huber Regression Is Asymptotically Mean Regression Calibrated}\label{sec::mean_calibration}

In this section, we shall show that, in a distribution-free setup, with properly selected scale parameter $\sigma$, Huber regression can be asymptotically mean regression calibrated, meaning that risk consistency implies the convergence of $f_{\mathbf z,\sigma}$ to  the conditional mean function $f^\star$ when $\sigma\rightarrow \infty$.

\subsection{Mean Regression Calibration Property}  

Recall that in the context of regression learning, one of the central concerns is the convergence of the learned empirical target function to the unknown truth function of interest, that is, the conditional mean function $f^\star$ in this study. While the distance between the empirical target function and $f^\star$ is not directly accessible, one settles for bounding the excess generalization error. 
The underlying philosophy is that the generalization error of a learning machine can be approximated by using its empirical counterpart and the excess generalization error can be bounded via learning theory arguments. As mentioned in the introduction, the regression estimator is called \textit{mean regression calibrated} if the convergence of the excess generalization error towards $0$ implies the convergence of the empirical target function to the conditional mean function \cite{steinwart2008support}.

Translating into the context of Huber regression, one is concerned with whether the convergence of the excess generalization error $\mathcal{R}^\sigma(f_{\mathbf{z},\sigma})-\mathcal{R}^\sigma(f_\sigma)$ towards $0$ implies the convergence of $f_{\mathbf{z},\sigma}$ to $f^\star$. 
However, the numerical experiment and the counterexample in the preceding section suggest a negative answer and demonstrate that the desired mean regression calibration property may, in general, not be true. 
This conflicts with our intuition and common understanding that the Huber loss can serve as a robust alternate of the least squares loss when the scale parameter is chosen sufficiently large. To bypass this problem, in what follows, noticing our interest in learning the conditional mean function $f^\star$, we turn to investigate the relation between the convergence of $\mathcal{R}^\sigma(f_{\mathbf{z},\sigma})$ to $\mathcal{R}^\sigma(f^\star)$ and the convergence of $f_{\mathbf{z},\sigma}$ to $f^\star$. More specifically, we shall show that under mild conditions, the convergence of $\mathcal{R}^\sigma(f_{\mathbf{z},\sigma})$ to  $\mathcal{R}^\sigma(f^\star)$ does imply the convergence of $f_{\mathbf{z},\sigma}$ to $f^\star$ when $\sigma\rightarrow\infty$. This justifies the mean regression calibration property in an asymptotic sense.

\subsection{A Comparison Theorem}  

We now look into the asymptotic mean calibration property by establishing a comparison theorem. For regression estimators that are produced by empirical risk minimization schemes with convex loss functions, some efforts on investigating their mean regression calibration properties have been made in the literature; see e.g., \cite{steinwart2008support}. For Huber regression estimators, it was concluded that they are mean regression calibrated if the response variable is upper bounded or the conditional noise variables $\varepsilon|X$ admit symmetric probability density functions. Recall that one of the most prominent merits of Huber regression estimators lies in that they can perform mean regression in the absence of light-noise assumptions. In this sense, boundedness or symmetry constraints on the noise variable should be considered as stringent ones. In this study, we are seeking to assess the Huber regression estimator $f_{\mathbf{z},\sigma}$ and investigate its mean regression calibration properties without resorting to light-tail distributional assumptions on the conditional distribution or on the noise. To this end, we introduce the following weak moment condition.
\begin{assumption}\label{assump_moment}
	There exists a constant $\epsilon>0$ such that 
	$\mathbb{E}|Y|^{1+\epsilon}<+\infty$.
\end{assumption}	

The moment condition in Assumption \ref{assump_moment} is  rather weak in the sense that  it admits the case where the response variable $Y$  possesses infinite variance. The same comment condition also applies to the distributions of the conditional random variable $Y|X$ and the conditional noise variable $\varepsilon|X$ under the additive data generating model, implying that heavy-tailed noise is allowed. 

As discussed earlier, without further distributional assumptions on the noise variable, $f_{\mathbf{z},\sigma}$ is, in general, biased and its population version $f_\sigma$ may be different from $f^\star$ almost everywhere on $\mathcal{X}$. However, such a bias can be upper bounded and may decrease with the increase of the $\sigma$ values. Results in this regard are stated in the following theorem under the above $(1+\epsilon)$-moment condition. 

\begin{theorem}\label{theorem_comparison}
	Let $\sigma>\max\{2M, 1\}.$ Under Assumption \ref{assump_moment}, there exists an absolute constant ${c_\epsilon}>0$  independent of $\sigma$ such that for any measurable function $f:\mathcal{X}\rightarrow \mathbb{R}$ with $\|f\|_\infty\leq M$, we have 	
	\begin{align}\label{equ_compar_thm}
	\Big|\left[\mathcal{R}^\sigma(f)-\mathcal{R}^\sigma(f^\star)\right]- \|f-f^\star\|_{2,\rho}^2 \Big|\leq\cfrac{c_\epsilon}{\sigma^\epsilon}.
	\end{align}
\end{theorem}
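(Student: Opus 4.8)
The plan is to compare the Huber risk with the least-squares risk through the elementary identity $\ell_\sigma(t)=t^2-(|t|-\sigma)_+^2$, while being careful about one point: under Assumption~\ref{assump_moment} the second moment $\mathbb{E}(Y-f(X))^2$ may be infinite (this is exactly the regime $\epsilon<1$ we care about), so I will never split $\mathcal{R}^\sigma$ into two separate expectations. Instead I keep everything inside the bounded difference $\ell_\sigma(Y-f(X))-\ell_\sigma(Y-f^\star(X))$ and control the resulting tail correction through the \emph{derivative} of $g_\sigma(t):=-(|t|-\sigma)_+^2$, which only costs a first power of the tail.

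\textbf{Step 1 (isolating the quadratic part).} Using $\ell_\sigma(t)=t^2+g_\sigma(t)$, write
\[
\mathcal{R}^\sigma(f)-\mathcal{R}^\sigma(f^\star)=\mathbb{E}\big[(Y-f(X))^2-(Y-f^\star(X))^2\big]+\mathbb{E}\big[g_\sigma(Y-f(X))-g_\sigma(Y-f^\star(X))\big].
\]
Since $(Y-f(X))^2-(Y-f^\star(X))^2=(2Y-f(X)-f^\star(X))(f^\star(X)-f(X))$ is bounded in modulus by $2M(2|Y|+2M)$ and $\mathbb{E}|Y|<\infty$ follows from Assumption~\ref{assump_moment} by Lyapunov's inequality, the first expectation is finite; conditioning on $X$ and using $\mathbb{E}[Y\mid X]=f^\star(X)$ reduces it to $\mathbb{E}_X[(f(X)-f^\star(X))^2]=\|f-f^\star\|_{2,\rho}^2$. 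Hence $\big[\mathcal{R}^\sigma(f)-\mathcal{R}^\sigma(f^\star)\big]-\|f-f^\star\|_{2,\rho}^2=\mathbb{E}\big[g_\sigma(Y-f(X))-g_\sigma(Y-f^\star(X))\big]$, and this expectation is finite (as the difference of finite quantities, or by the bound below).

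\textbf{Step 2 (controlling the tail correction).} The function $g_\sigma$ is continuously differentiable with $|g_\sigma'(t)|=2(|t|-\sigma)_+$, so the mean value theorem together with $\|f\|_\infty,\|f^\star\|_\infty\le M$ gives, for every $(x,y)$,
\[
\big|g_\sigma(y-f(x))-g_\sigma(y-f^\star(x))\big|\le |f(x)-f^\star(x)|\cdot 2\big(|y|+M-\sigma\big)_+\le 4M\big(|y|+M-\sigma\big)_+,
\]
where we used that $r\mapsto(r-\sigma)_+$ is nondecreasing and that the mean-value point lies between $y-f(x)$ and $y-f^\star(x)$, both of modulus at most $|y|+M$. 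Because $\sigma>2M$ we have $(|y|+M-\sigma)_+\le(|y|-\sigma/2)_+$, hence $\big|\mathbb{E}[g_\sigma(Y-f(X))-g_\sigma(Y-f^\star(X))]\big|\le 4M\,\mathbb{E}(|Y|-\sigma/2)_+$. Finally, on $\{|Y|\ge\sigma/2\}$ one has $1\le(2|Y|/\sigma)^\epsilon$, so $(|Y|-\sigma/2)_+\le|Y|\,\mathbf{1}_{\{|Y|\ge\sigma/2\}}\le 2^\epsilon\sigma^{-\epsilon}|Y|^{1+\epsilon}$ pointwise; taking expectations and combining with Step~1 yields \eqref{equ_compar_thm} with $c_\epsilon=2^{\epsilon+2}M\,\mathbb{E}|Y|^{1+\epsilon}$, which is finite by Assumption~\ref{assump_moment} and independent of $\sigma$.

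The only genuinely substantive choice — and the thing I expect to be the main obstacle to executing this cleanly — is exactly the one flagged above: bounding the correction term by $\mathbb{E}(|Y|-\sigma/2)_+$ (a first power, integrable since $\mathbb{E}|Y|<\infty$) rather than by $\mathbb{E}(|Y|-\sigma)_+^2$, which can diverge when $\epsilon<1$ and would make a naive least-squares comparison illegitimate. Everything else is routine bookkeeping.
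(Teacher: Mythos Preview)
Your proof is correct. The approach differs from the paper's in organization and in one key device.

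The paper splits the domain of $Y$ into $\{|y|<\sigma/2\}$ and $\{|y|\ge\sigma/2\}$, observes that on the first set the Huber and quadratic losses coincide, and then bounds the two contributions on the tail set separately: the Huber difference via the global Lipschitz constant $2\sigma$ together with Markov's inequality for $\Pr(|Y|\ge\sigma/2)$, and the quadratic difference via the factorization $(y-f)^2-(y-f^\star)^2=(f^\star-f)(2y-f-f^\star)$ together with H\"older's inequality. This yields $c_\epsilon=2^{3+\epsilon}(M+1)^2\,\mathbb{E}|Y|^{1+\epsilon}$.

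You instead use the algebraic identity $\ell_\sigma(t)=t^2-(|t|-\sigma)_+^2$ globally and control the single correction term $g_\sigma(y-f)-g_\sigma(y-f^\star)$ through the mean value theorem and the derivative bound $|g_\sigma'(t)|=2(|t|-\sigma)_+$. This is tighter: it directly captures that the correction vanishes when $|y|\le\sigma-M$, costs only a first power of the tail (avoiding any need for H\"older), and produces the smaller constant $c_\epsilon=2^{\epsilon+2}M\,\mathbb{E}|Y|^{1+\epsilon}$. Both arguments use the same moment-truncation idea $(|Y|-\sigma/2)_+\le 2^\epsilon\sigma^{-\epsilon}|Y|^{1+\epsilon}$, but yours reaches it in fewer steps.
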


Theorem  \ref{theorem_comparison} states that for any bounded measurable function $f$, under the $(1+\epsilon)$-moment conditions, the gap between   $\mathcal{R}^\sigma(f)-\mathcal{R}^\sigma(f^\star)$ and $\|f-f^\star\|_{2,\rho}^2$ is up to $\mathcal{O}(\sigma^{-\epsilon})$. 

Consequently, with a sufficiently large $\sigma$ value  or sufficiently light-tailed noise, this gap could be sufficiently small. As a special case, let us consider the presence of Gaussian or sub-Gaussian noise where the moment condition holds for arbitrarily large $\epsilon$ values. In this scenario, the gap between the above two quantities can be arbitrarily small. These findings remind us that in order to debias the Huber regression estimator, one may relate the $\sigma$ value  to the sample size $n$. In other words, from an asymptotic viewpoint, with diverging $\sigma$ values, according to Theorem \ref{theorem_comparison}, Huber regression is asymptotically mean regression calibrated. Following this spirit, we shall proceed with the assessment based on diverging $\sigma$ values by deriving their convergence rates to the conditional mean function $f^\star$.

\section{Relaxing the Bernstein Condition for Assessing Huber Regression}\label{sec::realxing_Bernstein}

From our previous discussions, in order to derive convergence rates for $f_{\mathbf{z},\sigma}$, one needs to bound the excess generalization error $\mathcal{R}^\sigma(f_{\mathbf{z},\sigma})-\mathcal{R}^\sigma(f^\star)$ which essentially requires us to deal with the following set of random variables
\begin{align*}
\mathcal{F}_\mathcal{H}:=\Big\{\xi\,|\,\xi(x,y)=\ell_\sigma(y-f(x))-\ell_\sigma(y-f^\star(x)),\,\,f\in\mathcal{H},\,\, (x,y)\in \mathcal{X}\times\mathcal{Y}\Big\}.
\end{align*}
The existing studies in learning theory remind us that  it is crucial to establish the so called \emph{Bernstein condition}, i.e., bounding the second moment of $\xi\in \mathcal{F}_\mathcal{H}$ by using its first moment. We will show that while the standard Bernstein condition does not hold, one can relax it to develop fast convergence rates for $\mathcal{R}^\sigma(f_{\mathbf{z},\sigma})-\mathcal{R}^\sigma(f^\star)$. Let us start with recapping the Bernstein condition in learning theory. 

\subsection{Bernstein Conditions in  Learning Theory}

Originally introduced in \cite{bartlett2006empirical} in the context of empirical risk minimization, the standard Bernstein condition can be restated as follows: a set $\mathcal{F}$ of  random variables is said to satisfy the $(\beta, B)$-Bernstein condition with $0<\beta\leq 1$  and $B>0$ if for any $f\in\mathcal{F}$, it holds that $\mathbb{E}f^2\leq B (\mathbb{E}(f))^\beta$. In other words, the second moment of the random variable (and so the variance) can be upper bounded by its first moment. Later, the standard Bernstein condition was generalized and extended into various other Bernstein-like conditions for analyzing learning algorithms in different contexts; see e.g., \cite{steinwart2007fast,van2015fast,chinot2019robust}. It turns out that the Bernstein condition and its variants play an important role in establishing fast convergence rates for learning algorithms of interest because they provide tight upper bounds for the variance of the random variables induced by the resulting estimators.  

In the context of Huber regression, a Bernstein-like condition is also desired in order to establish fast convergence rates for the excess generalization error $\mathcal{R}^\sigma(f_{\mathbf{z},\sigma})-\mathcal{R}^\sigma(f^\star)$. However, as shown in the preceding section, without further distributional restrictions to the noise variable, $f^\star$ may not be the optimal hypothesis that minimizes $\mathcal{R}^\sigma(f)$ over the measurable function space $\mathcal{M}$. Consequently, $\mathcal{R}^\sigma(f)-\mathcal{R}^\sigma(f^\star)$ is not necessarily positive. As a result, the usual Bernstein condition, namely, $\mathbb{E}\xi^2\leq B(\mathbb{E}\xi)^\beta$ for $\xi\in\mathcal{F}_\mathcal{H}$ with constants $B>0$ and $0<\beta\leq 1$, may not hold. This brings barriers to the development of fast convergence rates for $\mathcal{R}^\sigma(f)-\mathcal{R}^\sigma(f^\star)$.  
To circumvent this problem, in our study, we shall establish a relaxed Bernstein condition, which takes the form
\begin{align*} 
\mathbb{E}\xi^2\leq B(\mathbb{E}\xi)^\beta + g(\sigma),
\end{align*}
with $B>0$, $0<\beta\leq 1$, and $g$ a nonnegative function of $\sigma$. A motivating observation for establishing such a relaxed Bernstein condition is that the gap between $\mathbb{E}(\xi)$ and  $\|f-f^\star\|_{2,\rho}^2$ can be upper bounded by $\mathcal{O}(\sigma^{-\epsilon})$ for any $f\in\mathcal{H}$, as stated in Theorem \ref{theorem_comparison}.

\subsection{A Relaxed Bernstein Condition}

To establish our relaxed Bernstein condition, we  prove the following variance bound for $\xi\in\mathcal F_{\mathcal H}.$

\begin{theorem}\label{variance_estimate}
	Let Assumption \ref{assump_moment} hold and  $\sigma>\max\{2M,1\}$. For any measurable function $f: \mathcal{X}\rightarrow \mathbb{R}$ with $\|f\|_\infty\leq M$, the random variable  $\xi(x,y)=\ell_\sigma(y-f(x))-\ell_\sigma(y-f^\star(x))$ satisfies 
	\begin{align*}
	\mathbb{E}\xi^2\leq c_1\|f-f^\star\|_{2,\rho}^{\frac{2(\epsilon-1)_+}{\epsilon+1}} + c_2\sigma^{1-\epsilon},  \end{align*}
	where  $c_1$ and $c_2$ are absolute positive constants independent of $\sigma$ or $f$. 
\end{theorem}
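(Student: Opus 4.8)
\emph{Proof plan.} Throughout I would write $u=u(x,y)=y-f^\star(x)$ and $v=v(x)=f(x)-f^\star(x)$, so that $\xi(x,y)=\ell_\sigma(u-v)-\ell_\sigma(u)$ and $\|v\|_\infty\le 2M$. The loss $\ell_\sigma$ is continuously differentiable with $|\ell_\sigma'(t)|=2\min(|t|,\sigma)$, so the mean value theorem gives $\xi=-\ell_\sigma'(\zeta)\,v$ for some $\zeta$ between $u$ and $u-v$. From this I extract two pointwise bounds: the global Lipschitz bound $|\xi|\le 2\sigma|v|$ (sharp when $|u|$ is large, since the loss is clipped there), and, using $|\zeta|\le|u|+|v|$ together with $2|u||v|\le u^2+v^2$, the bound $\xi^2\le 4(|u|+|v|)^2v^2\le 8u^2v^2+32M^2v^2$ (useful when $|u|$ is moderate). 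The plan is then to condition on $X=x$, split $\mathbb{E}[\xi^2\mid X=x]$ over the events $\{|u|\le\sigma\}$ and $\{|u|>\sigma\}$, apply the appropriate pointwise bound on each piece, and integrate over $x$ against $\rho_{\mathcal{X}}$ using the tower property.

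On $\{|u|>\sigma\}$ I would use $|\xi|\le 2\sigma|v|$ and Markov's inequality $\mathbb{P}(|u|>\sigma\mid X=x)\le\sigma^{-(1+\epsilon)}\,\mathbb{E}[|u|^{1+\epsilon}\mid X=x]$, which gives $\mathbb{E}[\xi^2\mathbf 1_{\{|u|>\sigma\}}\mid X=x]\le 4\sigma^{1-\epsilon}v^2\,\mathbb{E}[|u|^{1+\epsilon}\mid X=x]$; integrating and using $v^2\le 4M^2$ together with $\mathbb{E}|u|^{1+\epsilon}\le 2^{\epsilon}(\mathbb{E}|Y|^{1+\epsilon}+M^{1+\epsilon})<\infty$ (Assumption~\ref{assump_moment}) produces a term of order $\sigma^{1-\epsilon}$. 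On $\{|u|\le\sigma\}$ I would use $\xi^2\le 8u^2v^2+32M^2v^2$, so the whole matter reduces to controlling $\int_{\mathcal{X}} v(x)^2\,\mathbb{E}[u^2\mathbf 1_{\{|u|\le\sigma\}}\mid X=x]\,d\rho_{\mathcal{X}}(x)$. When $0<\epsilon\le 1$ this is immediate: $u^2\mathbf 1_{\{|u|\le\sigma\}}\le\sigma^{1-\epsilon}|u|^{1+\epsilon}$, so this contribution is again $O(\sigma^{1-\epsilon})$, while the leftover $32M^2\|v\|_{2,\rho}^2\le 32M^2(2M)^2$ is a constant --- matching the claim since $(\epsilon-1)_+=0$ in this regime.

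The case $\epsilon>1$ is the substantive one and is where the exponent $\tfrac{2(\epsilon-1)}{\epsilon+1}$ is born. Here I would bound $\mathbb{E}[u^2\mathbf 1_{\{|u|\le\sigma\}}\mid X=x]\le\mathbb{E}[u^2\mid X=x]$ and estimate $\int_{\mathcal{X}}v(x)^2\,\mathbb{E}[u^2\mid X=x]\,d\rho_{\mathcal{X}}(x)$ by Hölder's inequality with conjugate exponents $p=\tfrac{\epsilon+1}{\epsilon-1}$ and $q=\tfrac{\epsilon+1}{2}$. The $v$-factor is handled by $v^{2p}=v^2v^{2(p-1)}\le(2M)^{2(p-1)}v^2$, which yields $\bigl(\int v^{2p}\,d\rho_{\mathcal{X}}\bigr)^{1/p}\le(2M)^{2-2/p}\,\|f-f^\star\|_{2,\rho}^{2/p}$ with $2/p=\tfrac{2(\epsilon-1)}{\epsilon+1}$; the $u$-factor is handled by conditional Jensen applied to the convex map $t\mapsto t^{(\epsilon+1)/2}$ (valid since $\tfrac{\epsilon+1}{2}\ge 1$), giving $(\mathbb{E}[u^2\mid X=x])^q\le\mathbb{E}[|u|^{1+\epsilon}\mid X=x]$ and hence $\bigl(\int(\mathbb{E}[u^2\mid X=\cdot])^q\,d\rho_{\mathcal{X}}\bigr)^{1/q}\le(\mathbb{E}|u|^{1+\epsilon})^{2/(\epsilon+1)}<\infty$. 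The remaining pieces --- the $32M^2\|f-f^\star\|_{2,\rho}^2$ term from $\{|u|\le\sigma\}$ and the $\sigma^{1-\epsilon}$ term from $\{|u|>\sigma\}$ --- are brought into the asserted form using $\|f-f^\star\|_{2,\rho}\le\|f-f^\star\|_\infty\le 2M$ to trade the surplus power $\tfrac{4}{\epsilon+1}>0$ for a constant. Summing the three contributions and letting $c_1,c_2$ absorb the finitely many $\sigma$- and $f$-independent constants from the two regimes gives the asserted inequality.

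The delicate point is exactly this $\epsilon>1$ Hölder step: the exponents must be chosen so that the $v$-side emits precisely the power $\tfrac{2(\epsilon-1)}{\epsilon+1}$ while the $u$-side closes against the only moment available, through Jensen. Everything else --- the case split, Markov, the truncation inequality for $\epsilon\le 1$, and the reduction to a pointwise estimate via the mean value theorem --- is routine bookkeeping, provided one is careful to keep all constants independent of $\sigma$ and $f$, which forces one always to downgrade surplus powers of $\|f-f^\star\|_{2,\rho}$ using the crude bound $2M$ rather than the reverse. Note also that the argument never uses the zero-mean property of the noise, only the $(1+\epsilon)$-moment bound of Assumption~\ref{assump_moment}.
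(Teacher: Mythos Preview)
Your proof is correct and follows essentially the same route as the paper: split the expectation on a large-$|y|$ event (you use $\{|y-f^\star(x)|>\sigma\}$, the paper uses $\{|y|\ge\sigma/2\}$), handle the large piece by the Lipschitz bound plus Markov, and on the small piece reduce to $\int v^2(2y-\cdots)^2$ and apply H\"older with exponents $(\tfrac{\epsilon+1}{\epsilon-1},\tfrac{\epsilon+1}{2})$ for $\epsilon>1$ and the truncation $|u|^2\le\sigma^{1-\epsilon}|u|^{1+\epsilon}$ for $\epsilon\le 1$. The only cosmetic differences are that you obtain the pointwise bound via the mean value theorem rather than the exact identity $\ell_\sigma=(\cdot)^2$ on the small event, and you run H\"older after conditioning on $X$ (closing the $u$-side with Jensen) whereas the paper applies H\"older directly in the joint space.
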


Recall the results in Theorem \ref{theorem_comparison} that states 
$\|f-f^\star\|_{2,\rho}^2\leq \mathbb{E}\xi + c_\epsilon \sigma^{-\epsilon}.$
This in connection with Theorem \ref{variance_estimate} immediately yields the following relaxed Bernstein condition:
\begin{equation*}
	\mathbb{E}\xi^2\leq  
	c_1(\mathbb E \xi )^{\frac{(\epsilon-1)_+}{\epsilon+1}}
	+  c_1 (c_\epsilon \sigma^{-\epsilon}) ^{\frac{(\epsilon-1)_+}{\epsilon+1}} + c_2\sigma^{1-\epsilon}.    
\end{equation*}
This relaxed Bernstein condition will be crucial in establishing error bounds and fast exponential-type convergence rates for the Huber regression estimator $f_{\mathbf{z},\sigma}$.

\section{Assessing Huber Regression under Weak Moment Conditions}\label{sec::convergence_rates} 

In this section, we present fast exponential-type convergence rates for the Huber regression estimator under the $(1+\epsilon)$-moment condition in Assumption \ref{assump_moment}. Specifically, we are interested in bounding the $L^2_{\rho_{\mathcal{X}}}$-distance between $f_{\mathbf{z},\sigma}$ and $f^\star$.

To state the result, we introduce the following capacity assumption. For any $\eta>0$, let $\mathcal N(\mathcal H, \eta)$ denote the covering number of $\mathcal H$ by the balls of radius $\eta$ in $C(\mathcal X)$, that is, 
\begin{align*} 
\mathcal N(\mathcal H, \eta) = \min\left\{
k\in \mathbb N: \hbox{there exist } f_j\in \mathcal H, j=1,\ldots, k \hbox{ such that } \mathcal H\subset \bigcup_{j=1}^k B(f_j, \eta) \right \}, 
\end{align*} 
where $B(f_j, \eta)= \{f\in C(\mathcal X): \|f-f_j\|_\infty<\eta\}.$ 
Our capacity condition is stated as follows.

\begin{assumption}\label{complexity_assumption} 
	There exist positive constants $q$ and $c$ such that  
	$$\log\mathcal{N}(\mathcal{H},\eta)\leq c \eta^{-q},\,\, \forall\,\eta>0.$$
\end{assumption}
Generalization error bounds in terms of the covering number argument under Assumption \ref{complexity_assumption} is typical in 
statistical learning theory; see e.g., \cite{anthony2009neural, cucker2007learning, steinwart2008support} and references therein.

To state our results on the convergence rates, we introduce a new function $f_\mathcal{H}$, which is defined as 
\begin{align*}
f_\mathcal{H}=\arg\min_{f\in\mathcal{H}}\|f-f^\star\|_{2,\rho}^2.    
\end{align*}
The function $f_\mathcal{H}$ is the optimal function in $\mathcal{H}$ that one may expect in approximating the truth function $f^\star$. The distance $\|f_\mathcal{H}-f^\star\|_{2,\rho}^2$ can be regarded as the  approximation error when working with the hypothesis space $\mathcal{H}$ and so corresponds to the bias caused by the choice of the hypothesis space $\mathcal{H}$.

\begin{theorem}\label{thm::convergence_rates}
	Suppose that Assumptions \ref{assump_moment} and \ref{complexity_assumption} hold and let $\sigma>\max\{2M,1\}$.  Let $f_{\mathbf{z},\sigma}$ be produced by \eqref{Huber_ERM}. For any $0<\delta<1$, with probability at least $1-\delta$, it holds that 
	\begin{align*}
	\|f_{\mathbf{z},\sigma}-f^\star\|_{2,\rho}^2\lesssim \|f_\mathcal{H}-f^\star\|_{2,\rho}^2+\log(2/\delta)\Psi(n,\epsilon,\sigma),
	\end{align*}
	where 
	\begin{align*}
	\Psi(n,\epsilon,\sigma):=
	\begin{cases}
	\frac{1}{\sigma^\epsilon}+\frac{\sigma}{n^{1/(q+1)}},
	&\,\quad\hbox{if}\quad 0<\epsilon\leq 1,\\[1ex]
	\frac{1}{\sigma^\epsilon}+\left(\frac{\sigma^{q+\frac{2\epsilon}{1+\epsilon}}}{n}\right)^{1/(q+1)}, 
	&\,\quad \hbox{if}\quad \epsilon>1.
	\end{cases}
	\end{align*}
\end{theorem}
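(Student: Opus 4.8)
The plan is to combine the comparison bound of Theorem~\ref{theorem_comparison}, the variance estimate of Theorem~\ref{variance_estimate} (equivalently, the relaxed Bernstein condition), and a covering-number concentration argument under Assumption~\ref{complexity_assumption}. Throughout write $\xi_f(x,y)=\ell_\sigma(y-f(x))-\ell_\sigma(y-f^\star(x))$ and $\mathbb{E}_{\mathbf{z}}\xi_f=\frac1n\sum_{i=1}^n\xi_f(x_i,y_i)$, and note that $\|\xi_f\|_\infty\le 4M\sigma$ and that $\xi_f$ is $2\sigma$-Lipschitz in $f$ with respect to $\|\cdot\|_\infty$. The first step is a reduction: since $f_{\mathbf{z},\sigma}\in\mathcal{H}$ has $\|f_{\mathbf{z},\sigma}\|_\infty\le M$, Theorem~\ref{theorem_comparison} gives
\[
\|f_{\mathbf{z},\sigma}-f^\star\|_{2,\rho}^2\le \big[\mathcal{R}^\sigma(f_{\mathbf{z},\sigma})-\mathcal{R}^\sigma(f^\star)\big]+\frac{c_\epsilon}{\sigma^\epsilon}=\mathbb{E}\xi_{f_{\mathbf{z},\sigma}}+\frac{c_\epsilon}{\sigma^\epsilon},
\]
so it suffices to bound the excess risk $\mathbb{E}\xi_{f_{\mathbf{z},\sigma}}$.

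Next I would use the empirical optimality $\mathbb{E}_{\mathbf{z}}\xi_{f_{\mathbf{z},\sigma}}\le\mathbb{E}_{\mathbf{z}}\xi_{f_\mathcal{H}}$ to split
\[
\mathbb{E}\xi_{f_{\mathbf{z},\sigma}}\le\underbrace{\big(\mathbb{E}\xi_{f_{\mathbf{z},\sigma}}-\mathbb{E}_{\mathbf{z}}\xi_{f_{\mathbf{z},\sigma}}\big)}_{\text{(I) uniform sample error}}+\underbrace{\big(\mathbb{E}_{\mathbf{z}}\xi_{f_\mathcal{H}}-\mathbb{E}\xi_{f_\mathcal{H}}\big)}_{\text{(II) single-function deviation}}+\underbrace{\mathbb{E}\xi_{f_\mathcal{H}}}_{\text{(III)}}.
\]
Term (III) equals $\mathcal{R}^\sigma(f_\mathcal{H})-\mathcal{R}^\sigma(f^\star)\le\|f_\mathcal{H}-f^\star\|_{2,\rho}^2+c_\epsilon\sigma^{-\epsilon}$ by Theorem~\ref{theorem_comparison}, contributing the approximation-error term. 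Term (II) is the deviation of a single bounded random variable; a one-sided Bernstein inequality with $\mathrm{Var}(\xi_{f_\mathcal{H}})\le c_1\|f_\mathcal{H}-f^\star\|_{2,\rho}^{2(\epsilon-1)_+/(\epsilon+1)}+c_2\sigma^{1-\epsilon}$ (Theorem~\ref{variance_estimate}) and $\|\xi_{f_\mathcal{H}}\|_\infty\le4M\sigma$ shows, after elementary manipulations, that (II) is at most a $\log(2/\delta)$-multiple of $\Psi(n,\epsilon,\sigma)$.

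The heart of the argument is term (I). Covering $\mathcal{H}$ by an $\eta$-net of cardinality $\exp(c\eta^{-q})$ turns it into a $2\sigma\eta$-net for $\{\xi_f\}$; combined with the relaxed Bernstein condition $\mathbb{E}\xi_f^2\le c_1(\mathbb{E}\xi_f)^\beta+c_1(c_\epsilon\sigma^{-\epsilon})^\beta+c_2\sigma^{1-\epsilon}$, $\beta=(\epsilon-1)_+/(\epsilon+1)$, I would apply a uniform Bernstein inequality over the net. For $0<\epsilon\le1$ one has $\beta=0$, hence $\mathrm{Var}(\xi_f)\lesssim\sigma^{1-\epsilon}$ with no mean--variance link, and up to constants and a $\log(2/\delta)$ factor $\text{(I)}\lesssim\sigma\eta+\sqrt{\sigma^{1-\epsilon}\eta^{-q}/n}+\sigma\eta^{-q}/n$; taking $\eta=n^{-1/(q+1)}$ makes the outer terms of order $\sigma n^{-1/(q+1)}$ and the arithmetic--geometric-mean inequality bounds the middle term by $\tfrac12(\sigma^{-\epsilon}+\sigma n^{-1/(q+1)})$, matching the first branch of $\Psi$. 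For $\epsilon>1$ one has $\beta\in(0,1)$, and I would run a peeling argument over the shells $\{f\in\mathcal{H}:\mathbb{E}\xi_f\le r\}$, on which Theorem~\ref{theorem_comparison} forces $\|f-f^\star\|_{2,\rho}^2\le r+c_\epsilon\sigma^{-\epsilon}$ and hence $\mathrm{Var}(\xi_f)\lesssim r^\beta+\sigma^{-\epsilon\beta}+\sigma^{1-\epsilon}$; resolving the resulting fixed-point inequality (equivalently, plugging the variance bound directly into the uniform Bernstein estimate and using Young's inequality to absorb a fraction of $\mathbb{E}\xi_{f_{\mathbf{z},\sigma}}$) gives, up to a $\log(2/\delta)$ factor,
\[
\text{(I)}\lesssim\sigma\eta+\Big(\frac{\eta^{-q}}{n}\Big)^{1/(2-\beta)}+\frac{\sigma\eta^{-q}}{n}+\sqrt{\frac{\sigma^{-\epsilon\beta}\eta^{-q}}{n}}+\sigma^{-\epsilon},
\]
and optimizing $\eta$ collapses the bound to $\sigma^{-\epsilon}+(\sigma^{q+2\epsilon/(1+\epsilon)}/n)^{1/(q+1)}$, using $2\epsilon/(1+\epsilon)=1+\beta$. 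Summing (I), (II), (III) and the two $c_\epsilon\sigma^{-\epsilon}$ bias terms yields the theorem.

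The main obstacle is the $\epsilon>1$ case. Here the mean--variance link is the \emph{relaxed} one, shifted by the bias terms $\sigma^{-\epsilon\beta}$ and $\sigma^{1-\epsilon}$, so the peeling/absorption step must be carried out with the relaxed condition, and the residual terms above must then be shown to be dominated by $\Psi$; this forces a case split on the size of $\sigma$ relative to $n$ (when $\sigma$ is comparatively small they are absorbed by the $\sigma^{-\epsilon}$ term, when $\sigma$ is comparatively large by the complexity term). At the same time one must carry the two distinct sources of $\sigma$-dependence --- the Lipschitz constant $2\sigma$, which feeds the net-approximation term $\sigma\eta$ and the covering number of $\{\xi_f\}$, and the $O(M\sigma)$ uniform bound, which feeds the ``range'' term of the Bernstein inequality --- through the optimization over $\eta$ so that the final exponent on $\sigma$ is exactly $q+2\epsilon/(1+\epsilon)$. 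The $0<\epsilon\le1$ case is comparatively routine once the covering-number Bernstein machinery is in place.
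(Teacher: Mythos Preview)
Your outline is correct and reaches the same conclusion, but the route is organized differently from the paper's. The paper does not split into the three pieces (I)--(III) and then peel; instead it packages the localization into a single \emph{ratio probability inequality}
\[
\Pr\Biggl\{\sup_{f\in\mathcal H}\frac{\bigl|[\mathcal R^\sigma(f)-\mathcal R^\sigma(f^\star)]-[\mathcal R^\sigma_{\mathbf z}(f)-\mathcal R^\sigma_{\mathbf z}(f^\star)]\bigr|}{\sqrt{\mathcal R^\sigma(f)-\mathcal R^\sigma(f^\star)+2\gamma}}>4\sqrt\gamma\Biggr\}
\le \mathcal N\!\left(\mathcal H,\tfrac{\gamma}{2\sigma}\right)\exp\{-\Theta(n,\gamma,\sigma)\},
\]
valid for all $\gamma>c_\epsilon\sigma^{-\epsilon}$, proved by applying Bernstein at the net points and using Theorems~\ref{theorem_comparison} and~\ref{variance_estimate} to lower-bound the Bernstein exponent by $n\gamma/\sigma$ (when $\epsilon\le1$) or $n\gamma/\sigma^{2\epsilon/(1+\epsilon)}$ (when $\epsilon>1$). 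One then solves for the smallest admissible $\gamma$ making the right side $\le\delta/2$; the constraint $\gamma>c_\epsilon\sigma^{-\epsilon}$ automatically produces the $\sigma^{-\epsilon}$ term in $\Psi$, and the balancing of $\log\mathcal N$ against $\Theta$ gives the second term. The same inequality is reused at $f_{\mathcal H,\sigma}$ (rather than a separate single-function Bernstein at $f_{\mathcal H}$ as in your (II)), after which $\mathcal R^\sigma(f_{\mathcal H,\sigma})\le\mathcal R^\sigma(f_{\mathcal H})$ and Theorem~\ref{theorem_comparison} convert to the approximation error.

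What each approach buys: the paper's ratio inequality avoids an explicit peeling or fixed-point argument and the case split you anticipate in the last paragraph; the denominator $\sqrt{\mathbb E\xi_f+2\gamma}$ does the self-normalization in one stroke. Your decomposition is more modular and makes the roles of the three error sources more transparent. Incidentally, your ``case split on the size of $\sigma$'' is not really needed: if you set $\eta=\Psi/\sigma$ and check each of your five terms against $\Psi$, the only nontrivial comparison is term~2, and it goes through using the identity $(1+\beta)/(1-\beta)=\epsilon$ together with $\Psi\ge\sigma^{-\epsilon}$.
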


The proof of Theorem \ref{thm::convergence_rates} is based on a 
ratio probability inequality and standard learning theory argument \cite{devroye2013probabilistic,vapnik1998statistical, anthony2009neural,cucker2007learning,steinwart2008support}, where results established in Theorems \ref{theorem_comparison} and \ref{variance_estimate} play a crucial role. 

The error bound in Theorem \ref{thm::convergence_rates} involves three components: the approximation error due to the imperfect choice of the hypothesis space $\mathcal H,$ the inherent bias caused by the integrated parameter $\sigma,$ and the sample error. In practice, the hypothesis space could be chosen by structural risk minimization so that the approximation error decreases to a tolerably small level \cite{friedman2001elements,vapnik1998statistical}. The value of $\sigma$ affects both the inherent bias and the sample error. The best choice depends on the sample size, the moment condition,
and the capacity of the hypothesis space. To see this,
consider a special case when $f^\star\in\mathcal{H}$ so that the approximation error $\|f_\mathcal{H}-f^\star\|_{2,\rho}^2$ disappears. With properly chosen $\sigma$ values, we immediately obtain the following convergence rates. 
\begin{corollary}\label{corollary::convergence_rates}
	Under the assumptions of Theorem \ref{thm::convergence_rates}, let $f^\star\in\mathcal{H}$ and $\sigma$ be chosen as $\sigma=n^{\Phi(\epsilon,q)}$ with 
	\begin{align*}
	\Phi(\epsilon,q)=
	\begin{cases}
	\dfrac{1}{(1+\epsilon)(1+q)},\,\,\, & \hbox{ if}\,\,\,0<\epsilon\leq 1,\\[1.5em]
	\dfrac{1+\epsilon}{q(1+\epsilon)^2+\epsilon(\epsilon+3)},\,\,\,& \hbox{ if}\,\,\,\epsilon>1.
	\end{cases}
	\end{align*}
	For any $0<\delta<1$, with probability at least $1-\delta$, we have 
	\begin{align*}
	\|f_{\mathbf{z},\sigma}-f^\star\|_{2,\rho}^2\lesssim \log(2/\delta)n^{-\epsilon\Phi(\epsilon,q)}.
	\end{align*}
\end{corollary}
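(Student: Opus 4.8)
The plan is to obtain Corollary~\ref{corollary::convergence_rates} as an immediate specialization of Theorem~\ref{thm::convergence_rates}. Since $f^\star\in\mathcal H$ by hypothesis and $\|f^\star-f^\star\|_{2,\rho}^2=0$, the minimizer $f_\mathcal H=\arg\min_{f\in\mathcal H}\|f-f^\star\|_{2,\rho}^2$ achieves the value $0$, so the approximation-error term in Theorem~\ref{thm::convergence_rates} disappears. Consequently, for any $0<\delta<1$, with probability at least $1-\delta$,
$$\|f_{\mathbf z,\sigma}-f^\star\|_{2,\rho}^2\lesssim \log(2/\delta)\,\Psi(n,\epsilon,\sigma),$$
and all that remains is to verify that the prescribed choice $\sigma=n^{\Phi(\epsilon,q)}$ yields $\Psi(n,\epsilon,\sigma)\lesssim n^{-\epsilon\Phi(\epsilon,q)}$. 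I would also note in passing that the standing requirement $\sigma>\max\{2M,1\}$ is satisfied for all sufficiently large $n$ because $\Phi(\epsilon,q)>0$, while for the finitely many small $n$ the claimed bound is absorbed into the implicit constant in $\lesssim$.

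For the regime $0<\epsilon\le1$, I would plug $\sigma=n^{\Phi}$ into the first branch of $\Psi$, which produces the two terms $n^{-\epsilon\Phi}$ and $n^{\Phi-1/(q+1)}$; the point is that $\Phi=\tfrac{1}{(1+\epsilon)(1+q)}$ is exactly the exponent that balances them, since $\Phi-\tfrac{1}{q+1}=-\epsilon\Phi$ is equivalent to $\tfrac{1}{q+1}=(1+\epsilon)\Phi$. Hence $\Psi(n,\epsilon,\sigma)\le 2n^{-\epsilon\Phi}$. For the regime $\epsilon>1$, substituting $\sigma=n^\Phi$ into the second branch gives $n^{-\epsilon\Phi}$ together with $n^{(\Phi(q+\frac{2\epsilon}{1+\epsilon})-1)/(q+1)}$, and I would check that the second exponent also equals $-\epsilon\Phi$, i.e.\ that $\Phi\big(q(1+\epsilon)+\epsilon+\tfrac{2\epsilon}{1+\epsilon}\big)=1$. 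Writing the bracket over the common denominator $1+\epsilon$ turns it into $\tfrac{q(1+\epsilon)^2+\epsilon(\epsilon+3)}{1+\epsilon}$, whose reciprocal is precisely $\Phi(\epsilon,q)=\tfrac{1+\epsilon}{q(1+\epsilon)^2+\epsilon(\epsilon+3)}$, so again $\Psi(n,\epsilon,\sigma)\le 2n^{-\epsilon\Phi}$. Feeding this back into the high-probability bound above completes the argument.

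In truth there is essentially no hard step here: the comparison inequality (Theorem~\ref{theorem_comparison}), the relaxed Bernstein condition (Theorem~\ref{variance_estimate}), and the ratio-probability/covering-number machinery have already done all the analytic work inside Theorem~\ref{thm::convergence_rates}, leaving only the optimization of the scale parameter. The one place that demands a moment's care is the exponent bookkeeping in the $\epsilon>1$ case, where the fractional contribution $\tfrac{2\epsilon}{1+\epsilon}$ inherited from the $\|f-f^\star\|_{2,\rho}^{2(\epsilon-1)/(\epsilon+1)}$ variance term must be tracked correctly when solving the balancing equation; everything else is routine substitution.
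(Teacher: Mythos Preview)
Your proposal is correct and matches the paper's approach exactly: the paper treats the corollary as an immediate consequence of Theorem~\ref{thm::convergence_rates} (``we immediately obtain the following convergence rates''), and your argument---dropping the approximation error since $f^\star\in\mathcal H$ and then balancing the two terms of $\Psi(n,\epsilon,\sigma)$ to determine $\Phi(\epsilon,q)$---is precisely the intended specialization. Your bookkeeping in both regimes is accurate, and the remark about the constraint $\sigma>\max\{2M,1\}$ holding for large $n$ is a reasonable clarification that the paper leaves implicit.
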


According to Corollary \ref{corollary::convergence_rates}, with properly chosen diverging $\sigma$ values, we obtain exponential-type convergence rates for $f_{\mathbf{z},\sigma}$. In particular, if the noise variable is bounded or sub-Gaussian and hence  the moment condition in Assumption \ref{assump_moment} holds for any $\epsilon>0,$
one can select an arbitrarily large $\epsilon$ to obtain convergence rates of order arbitrarily close to $O(n^{-\frac{1}{1+q}}).$ As a comparison, recall that for least square estimators convergence rates  of order $O(n^{-\frac 1{1+q/2}})$ can usually be established; see e.g., \cite{bauer2007regularization,steinwart2008support, anthony2009neural} and references therein. Moreover, note that with weaker moment conditions, i.e., smaller $\epsilon$ values, one gets slower convergence rates for $f_{\mathbf{z},\sigma}$, indicating increased sacrifice for robustness.  This coincides with our intuitive understanding of robust regression estimators.  
On the other hand, if $f^\star$ is smooth enough and one selects a smooth hypothesis space (such as the reproducing kernel Hilbert spaces induced by radial basis kernels or neural networks with smooth activate functions), then $q\to 0$ and the difference between Huber regression and least square method could be minimal, indicating less sacrifice necessary for learning smooth functions. Finally, we stress that we obtain exponential convergence rates even for the case when $0<\epsilon < 1$ where the distribution of the conditional variable $Y|X$ does not possess finite variance and a least square based estimator cannot even be defined.  This further explains the robustness of Huber regression estimators.

\section{Proofs of Theorems}\label{sec::proofs}
\subsection{Proof of Theorem \ref{theorem_comparison}}
	Let $f: \mathcal{X}\rightarrow \mathbb{R}$ with $\|f\|_\infty\leq M$ be a measurable function. For any $\sigma>\max\{2M, 1\}$, we denote the two events $\mathrm{I}_\mathrm{Y}$ and $\mathrm{II}_\mathrm{Y}$ as follows
	\begin{align*}
	\mathrm{I}_\mathrm{Y}:=\left\{y: |y|\geq\frac{\sigma}{2}\right\},
	\end{align*}
	and
	\begin{align*}
	\mathrm{II}_\mathrm{Y}:=\left\{y: |y|<\frac{\sigma}{2}\right\}.
	\end{align*}
	Noticing that 
	\begin{align*}
	\int_{\mathcal{X}}\int_{\mathcal{Y}}[(y-f(x))^2-(y-f^\star(x))^2]\mathrm{d}\rho(y|x)\mathrm{d}\rho_{\mathsmaller{\mathcal{X}}}(x)=\|f-f^\star\|_{2,\rho}^2,
	\end{align*}
	we have
	\begin{align*}
	&\Big|\left[\mathcal{R}^\sigma(f)-\mathcal{R}^\sigma(f^\star)\right]-\|f-f^\star\|_{2,\rho}^2\Big|\\[1ex]
	=\ &\Big|\int_{\mathcal{X}}\int_{\mathcal{Y}}\left[\ell_\sigma(y-f(x))-\ell_\sigma(y-f^\star(x))\right]-[(y-f(x))^2-(y-f^\star(x))^2]\mathrm{d}\rho(y|x)\mathrm{d}\rho_{\mathsmaller{\mathcal{X}}}(x)\Big|\\[1ex]
	\leq\ &\Big|\int_{\mathcal{X}}\int_{\mathrm{I}_\mathrm{Y}\bigcup\mathrm{II}_\mathrm{Y}}\left[\ell_\sigma(y-f(x))-\ell_\sigma(y-f^\star(x))\right]\mathrm{d}\rho(y|x)\mathrm{d}\rho_{\mathsmaller{\mathcal{X}}}(x)\Big|\\[1ex]
	&\ +\Big|\int_{\mathcal{X}}\int_{\mathrm{I}_\mathrm{Y}\bigcup\mathrm{II}_\mathrm{Y}}[(y-f(x))^2-(y-f^\star(x))^2]\mathrm{d}\rho(y|x)\mathrm{d}\rho_{\mathsmaller{\mathcal{X}}}(x)\Big|.
	\end{align*}
	For any $(x,y)\in \mathcal{X}\times\mathrm{II}_\mathrm{Y}$, since $\sigma>\max\{2M, 1\}$, we see that 
	\begin{align*}
	|y-f(x)|\leq |y|+\|f\|_{\infty}<\sigma,
	\end{align*}
	and
	\begin{align*}
	|y-f^\star(x)|\leq |y|+\|f^\star\|_\infty<\sigma.
	\end{align*}
	Consequently, for any $(x,y)\in\mathcal{X}\times \mathrm{II}_\mathrm{Y}$, we have  
	\begin{align*}
	\left[\ell_\sigma(y-f(x))-\ell_\sigma(y-f^\star(x))\right]-[(y-f(x))^2-(y-f^\star(x))^2]=0, 
	\end{align*}
	and hence
	\begin{align}
	&\Big|\left[\mathcal{R}^\sigma(f)-\mathcal{R}^\sigma(f^\star)\right]-\|f-f^\star\|_{2,\rho}^2\Big| \nonumber \\[1ex]
	\leq&\Big|\int_{\mathcal{X}}\int_{\mathrm{I}_\mathrm{Y}}\left[\ell_\sigma(y-f(x))-\ell_\sigma(y-f^\star(x))\right]\mathrm{d}\rho(y|x)\mathrm{d}\rho_{\mathsmaller{\mathcal{X}}}(x)\Big| \nonumber \\[1ex]
	&+\Big|\int_{\mathcal{X}}\int_{\mathrm{I}_\mathrm{Y}}[(y-f(x))^2-(y-f^\star(x))^2]\mathrm{d}\rho(y|x)\mathrm{d}\rho_{\mathsmaller{\mathcal{X}}}(x)\Big|. 
	\label{equ_two_terms}
	\end{align}
	Recall that the Huber loss \eqref{huber_loss} is Lipschitz continuous with Lipschitz constant $2\sigma$. The first term of the right-hand side of equation \eqref{equ_two_terms}  can be upper bounded as follows
	\begin{align*}
	&\Big|\int_{\mathcal{X}}\int_{\mathrm{I}_\mathrm{Y}}\left[\ell_\sigma(y-f(x))-\ell_\sigma(y-f^\star(x))\right]\mathrm{d}\rho(y|x)\mathrm{d}\rho_{\mathsmaller{\mathcal{X}}}(x)\Big|\\[1ex]
	\leq \ & 2\sigma\Big|\int_{\mathcal{X}}\int_{\mathrm{I}_\mathrm{Y}}|f(x)-f^\star(x)|\mathrm{d}\rho(y|x)\mathrm{d}\rho_{\mathsmaller{\mathcal{X}}}(x)\Big|\\[1ex]
	 \leq\  & 
	2\sigma\|f-f^\star\|_\infty \Pr(\mathrm{I}_\mathrm{Y}).
	\end{align*}
	The quantity $\Pr(\mathrm{I}_\mathrm{Y})$ can be bounded by applying Markov's inequality which yields
	\begin{align}\label{equ_prob}
	\Pr(\mathrm{I}_\mathrm{Y})\leq \cfrac{2^{1+\epsilon}\mathbb{E}\left(|Y|^{1+\epsilon}\right)}{\sigma^{1+\epsilon}}.
	\end{align} 
	Therefore, we have
	\begin{align}\label{equ_est1}
	\int_{\mathcal{X}}\int_{\mathrm{I}_\mathrm{Y}}\left[\ell_\sigma(y-f(x))-\ell_\sigma(y-f^\star(x))\right]\mathrm{d}\rho(y|x)\mathrm{d}\rho_{\mathsmaller{\mathcal{X}}}(x)\leq \cfrac{2^{2+\epsilon}\|f-f^\star\|_\infty \mathbb{E}\left(|Y|^{1+\epsilon}\right)}{\sigma^\epsilon}.
	\end{align}
	The second term in the right-hand side of equation \eqref{equ_two_terms} can be upper bounded as follows
	\begin{align*}
	&\quad\Big|\int_{\mathcal{X}}\int_{\mathrm{I}_\mathrm{Y}}[(y-f(x))^2-(y-f^\star(x))^2]\mathrm{d}\rho(y|x)\mathrm{d}\rho_{\mathsmaller{\mathcal{X}}}(x)\Big|\\
	&\leq \|f-f^\star\|_\infty\int_{\mathcal{X}}\int_{\mathrm{I}_\mathrm{Y}}|2y-f(x)-f^\star(x)|\mathrm{d}\rho(y|x)\mathrm{d}\rho_{\mathsmaller{\mathcal{X}}}(x)\\
	&\leq \|f-f^\star\|_\infty\int_{\mathcal{X}}\int_{\mathrm{I}_\mathrm{Y}}(2|y|+\|f^\star\|_\infty+\|f\|_\infty)\mathrm{d}\rho(y|x)\mathrm{d}\rho_{\mathsmaller{\mathcal{X}}}(x)\\
	&\leq \|f-f^\star\|_\infty \left(2\int_{\mathrm{I}_\mathrm{Y}}|y|\mathrm{d}\rho(y)+(\|f^\star\|_\infty+\|f\|_\infty)\Pr(\mathrm{I}_\mathrm{Y})\right).
	\end{align*}
	By applying H\"{o}lder inequality and recalling the estimate in \eqref{equ_prob}, we have
	\begin{align*}
	\int_{\mathrm{I}_\mathrm{Y}}|y|\mathrm{d}\rho(y)\leq \big(\Pr(\mathrm{I}_\mathrm{Y})\big)^{\frac{\epsilon}{1+\epsilon}}\big(\mathbb{E}(|Y|^{1+\epsilon})\big)^{\frac{1}{1+\epsilon}}\leq \cfrac{2^{\epsilon}\mathbb{E}\left(|Y|^{1+\epsilon}\right)}{\sigma^{\epsilon}}. 
	\end{align*}
	As a result, we  conclude that 
	\begin{align}
	&\quad\Big|\int_{\mathcal{X}}\int_{\mathrm{I}_\mathrm{Y}}[(y-f(x))^2-(y-f^\star(x))^2]\mathrm{d}\rho(y|x)\mathrm{d}\rho_{\mathsmaller{\mathcal{X}}}(x)\Big|
	\nonumber \\[1ex]
	&\leq  \cfrac{2^{1+\epsilon}\|f-f^\star\|_\infty\mathbb{E}\left(|Y|^{1+\epsilon}\right)}{\sigma^{\epsilon}}+\cfrac{2^{1+\epsilon}(\|f\|_\infty+\|f^\star\|_\infty)^2\mathbb{E}\left(|Y|^{1+\epsilon}\right)}{\sigma^{1+\epsilon}}. 
	\label{equ_est2}
	\end{align}
	From \eqref{equ_est1} and \eqref{equ_est2},   $\big|\left[\mathcal{R}^\sigma(f)-\mathcal{R}^\sigma(f^\star)\right]-\|f-f^\star\|_{2,\rho}^2\big|$ can be upper bounded by
	\begin{align*}
	\cfrac{(2^{2+\epsilon}+2^{1+\epsilon})\|f-f^\star\|_\infty \mathbb{E}\left(|Y|^{1+\epsilon}\right)}{\sigma^\epsilon}+\cfrac{2^{1+\epsilon}(\|f\|_\infty+\|f^\star\|_\infty)^2\mathbb{E}\left(|Y|^{1+\epsilon}\right)}{\sigma^{1+\epsilon}}.
	\end{align*}
	Therefore, the desired estimate \eqref{equ_compar_thm} holds with  $c_\epsilon=2^{3+\epsilon}(M+1)^2\mathbb{E}\left(|Y|^{1+\epsilon}\right)$. This completes the proof of Theorem \ref{theorem_comparison}.

\subsection{Proof of Theorem \ref{variance_estimate}}
	
Let $f: \mathcal{X}\rightarrow \mathbb{R}$ with $\|f\|_\infty\leq M$ be a measurable function. For any $\sigma>\max\{2M, 1\}$, we again consider the following two events  
\begin{align*}
\mathrm{I}_\mathrm{Y}:=\left\{y: |y|\geq\frac{\sigma}{2}\right\},
\end{align*}
and
\begin{align*}
\mathrm{II}_\mathrm{Y}:=\left\{y: |y|<\frac{\sigma}{2}\right\}.
\end{align*}
Based on the above notation, it is obvious to see the following decomposition 
\begin{align*}
\mathbb{E}\xi^2=&\int_{\mathcal{X}\times\mathcal{Y}}(\ell_\sigma(y-f(x))-\ell_\sigma(y-f^\star(x)))^2\mathrm{d}\rho(x,y)\\
=&\int_{\mathcal{X}\times\mathrm{I}_Y}(\ell_\sigma(y-f(x))-\ell_\sigma(y-f^\star(x)))^2\mathrm{d}\rho(x,y)\\
&+\int_{\mathcal{X}\times\mathrm{II}_Y}(\ell_\sigma(y-f(x))-\ell_\sigma(y-f^\star(x)))^2\mathrm{d}\rho(x,y)\\
:= & Q_1 +Q_2.
\end{align*}

The first term $Q_1$ can be easily bounded by applying the Lipschitz continuity property of the Huber loss \eqref{huber_loss} and Markov's inequality:  
\begin{align*}
Q_1 \leq & 4\sigma^2 \int_{\mathcal{X}\times\mathrm{I}_Y}\big(f(x)-f^\star(x))^2\mathrm{d}\rho(x,y)  
\leq  16M^2\sigma^2 \Pr(\mathrm{I}_Y)\leq  16M^2\mathbb{E}|Y|^{1+\epsilon}\sigma^{1-\epsilon}.
\end{align*}
To bound the second term $Q_2,$ noticing that for any $(x,y)\in \mathcal{X}\times\mathrm{II}_\mathrm{Y}$, since $\sigma>\max\{2M, 1\}$, we have
\begin{align*}
|y-f(x)|\leq |y|+\|f\|_{\infty}<\sigma,
\end{align*}
and
\begin{align*}
|y-f^\star(x)|\leq |y|+\|f^\star\|_\infty<\sigma.
\end{align*}
By the definition of the Huber loss $\ell_\sigma$, for any $(x,y)\in \mathcal{X}\times\mathrm{II}_\mathrm{Y}$, we have  
\begin{align*}
\ell_\sigma(y-f(x))-\ell_\sigma(y-f^\star(x))= (y-f(x))^2-(y-f^\star(x))^2.
\end{align*}
Therefore,   
\begin{align*}
Q_2=&\int_{\mathcal{X}\times\mathrm{II}_Y}\big((y-f(x))^2-(y-f^\star(x))^2\big)^2\mathrm{d}\rho(x,y)\\[1ex]
=&\int_{\mathcal{X}}\int_{\mathrm{II}_Y}(f(x)-f^\star(x))^2(2y-f(x)-f^\star(x))^2\mathrm{d}\rho(y|x)\mathrm{d}\rho_{\mathsmaller{\mathcal{X}}}(x).
\end{align*}
If $\epsilon>1$, applying H\"{o}lder's inequality, we obtain
\begin{align*}
Q_2  &\leq  \int_{\mathcal{X}}\int_{\mathcal{Y}}(f(x)-f^\star(x))^2(2y-f(x)-f^\star(x))^2\mathrm{d}\rho(y|x)\mathrm{d}\rho_{\mathsmaller{\mathcal{X}}}(x) \\
&\leq \|f-f^\star\|_\infty^{\frac{4}{1+\epsilon}}\mathbb{E}\left(|f(x)-f^\star(x)|^{\frac{2\epsilon-2}{1+\epsilon}}(2|y|+2M)^2\right)\\
&\leq 64(M+1)^2 (\mathbb{E}|Y|^{1+\epsilon}+M^2+1)\|f-f^\star\|_{2,\rho}^{\frac{2\epsilon-2}{1+\epsilon}}.
\end{align*}
If $0<\epsilon\leq 1$, we have the following estimate 
\begin{align*}
Q_2 & \leq  48M^2\int_{\mathcal{X}}\int_{\mathrm{II}_Y}(|y|^{1+\epsilon}|y|^{1-\epsilon}+M^2) \mathrm{d}\rho(y|x)\mathrm{d}\rho_{\mathsmaller{\mathcal{X}}}(x) \\[1ex]
& \leq  48M^2(\mathbb{E}|Y|^{1+\epsilon}+M^{1+\epsilon})  \sigma^{1-\epsilon}.
\end{align*} 

Combining the above estimates for $Q_1$ and $Q_2$, we come to the conclusion that 
\begin{align*}
\mathbb{E}\xi^2\leq c_1\|f-f^\star\|_{2,\rho}^{\frac{2(\epsilon-1)_+}{\epsilon+1}} + c_2\sigma^{1-\epsilon},  
\end{align*}
with $c_1=64(M+1)^2 (\mathbb{E}|Y|^{1+\epsilon}+M^2+1)$ and $c_2=48M^2(\mathbb{E}|Y|^{1+\epsilon}+M^{1+\epsilon})+16M^2\mathbb{E}|Y|^{1+\epsilon}$. This completes the proof of Theorem \ref{variance_estimate}.

\subsection{Proof of Theorem \ref{thm::convergence_rates}}

We first prove a ratio inequality in Subsection \ref{subsec:ratio} 
which plays an important role in the proof of Theorem \ref{thm::convergence_rates}. The detailed proof will then be given in Subsection \ref{subsec:pf}. To proceed, for any measurable function $f:\mathcal{X}\rightarrow\mathbb{R}$, we denote
\begin{align*}
\mathcal{R}_\mathbf{z}^\sigma(f)=\frac{1}{n}\sum_{i=1}^n\ell_\sigma(y_i-f(x_i)),    
\end{align*}
and recall the notation 
\begin{align*}
f_{\mathcal{H},\sigma}=\arg\min_{f\in\mathcal{H}}\mathcal{R}^\sigma(f).    
\end{align*}

\subsubsection{A Ratio Inequality}
\label{subsec:ratio}

\begin{proposition}\label{ratio_inequality}
Let $\sigma>\max\{2M,1\}$. Under Assumptions \ref{assump_moment} and \ref{complexity_assumption}, for any $\gamma> \frac{c_\mathsmaller{\epsilon}}{\sigma^{\epsilon}}$, we have 
\begin{align*}
\Pr\left\{ \sup_{f\in\mathcal{H}}
\frac{\big|[\mathcal{R}^{\sigma}(f)-\mathcal{R}^{\sigma}(f^\star)]-
	[\mathcal{R}_{\mathbf{z}}^\sigma(f)-\mathcal{R}_{\mathbf{z}}^\sigma(f^\star)]\big|}
{\sqrt{\mathcal{R}^{\sigma}(f)-\mathcal{R}^{\sigma}(f^\star)+2\gamma}} 
> 4\sqrt{\gamma} \right\} 
\le \mathcal{N}\left(\mathcal{H}, \frac{\gamma}{2\sigma} \right) e^{-\Theta(n,\gamma,\sigma)},
\end{align*} 
where  
\begin{align*}
\Theta(n,\gamma,\sigma)=
\begin{cases}
-\frac{n\gamma}{c_1^\prime \sigma}, & \quad\hbox{if}\,\,0<\epsilon \leq 1,\\[1ex]
-\frac{n\gamma}{c_2^\prime \sigma^{\frac{2\epsilon}{\epsilon+1}}}, & \quad\hbox{if}\,\, \epsilon > 1,
\end{cases}
\end{align*}
with $c_1^\prime$ and $c_2^\prime$ being two positive constants independent of $n$, $\gamma$, or $\sigma$ that will be explicitly specified in the proof.
\end{proposition}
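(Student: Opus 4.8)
The plan is to apply a standard one-sided relative deviation (ratio) concentration argument, combined with a covering-number discretization of $\mathcal{H}$ and the variance bound from Theorem~\ref{variance_estimate}. Set $\xi_f(x,y) = \ell_\sigma(y-f(x)) - \ell_\sigma(y-f^\star(x))$, so that $\mathbb{E}\xi_f = \mathcal{R}^\sigma(f) - \mathcal{R}^\sigma(f^\star)$ and $\frac{1}{n}\sum_i \xi_f(x_i,y_i) = \mathcal{R}^\sigma_{\mathbf{z}}(f) - \mathcal{R}^\sigma_{\mathbf{z}}(f^\star)$. First I would record the pointwise bound $\|\xi_f\|_\infty \lesssim \sigma M$ (from the $2\sigma$-Lipschitz property of $\ell_\sigma$ and $\|f\|_\infty, \|f^\star\|_\infty \le M$), and the variance bound $\mathbb{E}\xi_f^2 \le c_1 \|f-f^\star\|_{2,\rho}^{2(\epsilon-1)_+/(\epsilon+1)} + c_2 \sigma^{1-\epsilon}$ from Theorem~\ref{variance_estimate}. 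Then, using Theorem~\ref{theorem_comparison} in the form $\|f-f^\star\|_{2,\rho}^2 \le \mathbb{E}\xi_f + c_\epsilon\sigma^{-\epsilon} \le \mathbb{E}\xi_f + 2\gamma$ (valid since $\gamma > c_\epsilon/\sigma^\epsilon$), I would convert the variance bound into a bound of the form $\mathbb{E}\xi_f^2 \le c\,(\mathbb{E}\xi_f + 2\gamma)^\theta \sigma^{\tau} + c'\sigma^{1-\epsilon}$ where $\theta = (\epsilon-1)_+/(\epsilon+1)$; when $0<\epsilon\le 1$ the first term vanishes and the whole second moment is $\lesssim \sigma^{1-\epsilon} \le \sigma \cdot (\mathbb{E}\xi_f + 2\gamma)/(2\gamma) \cdot \sigma^{-\epsilon} \lesssim \sigma (\mathbb{E}\xi_f+2\gamma)$ after absorbing $\gamma > c_\epsilon\sigma^{-\epsilon}$; when $\epsilon > 1$ one gets a bound $\lesssim \sigma^{2\epsilon/(\epsilon+1)}(\mathbb{E}\xi_f+2\gamma)$ by bounding $(\mathbb{E}\xi_f+2\gamma)^\theta \le (\mathbb{E}\xi_f+2\gamma)\cdot(2\gamma)^{\theta-1}$ is not quite right since $\theta<1$; instead use $(\mathbb{E}\xi_f+2\gamma)^\theta \cdot \sigma^{?} \lesssim (\mathbb{E}\xi_f+2\gamma)$ only after noting $\mathbb{E}\xi_f + 2\gamma \ge 2\gamma > 2c_\epsilon\sigma^{-\epsilon}$ is bounded below, which is too weak; the cleaner route is to bound $\mathbb{E}\xi_f^2 \le \frac{1}{4}(\mathbb{E}\xi_f + 2\gamma) \cdot \bigl[4c_1(\mathbb{E}\xi_f+2\gamma)^{\theta-1} + 4c_2\sigma^{1-\epsilon}(\mathbb{E}\xi_f+2\gamma)^{-1}\bigr]$ and observe the bracketed factor is $\lesssim \sigma^{2\epsilon/(\epsilon+1)}$ since $\mathbb{E}\xi_f + 2\gamma \gtrsim \|f-f^\star\|^2_{2,\rho}$ and $(\epsilon-1)/(\epsilon+1) - 1 = -2/(\epsilon+1)$, using $\|f-f^\star\|_{2,\rho}\le 2M$.

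Next I would apply a Bernstein-type one-sided relative deviation inequality to each fixed $f$: for a random variable $\xi$ with $\mathbb{E}\xi^2 \le V(\mathbb{E}\xi + 2\gamma)$ and $\|\xi\|_\infty \le B$, the probability that $\frac{|\mathbb{E}\xi - \frac{1}{n}\sum\xi_i|}{\sqrt{\mathbb{E}\xi+2\gamma}} > 4\sqrt\gamma$ is at most $2\exp\bigl(-\frac{cn\gamma}{V + B\sqrt{\gamma}/\sqrt{\mathbb{E}\xi+2\gamma}}\bigr)$; since $\mathbb{E}\xi + 2\gamma \ge 2\gamma$, the $B\sqrt\gamma/\sqrt{\mathbb{E}\xi+2\gamma} \le B/\sqrt2$ term is $\lesssim \sigma M$, of the same order (or smaller) than $V$ in each regime, so the exponent is $\lesssim -n\gamma/\sigma$ when $0<\epsilon\le 1$ and $\lesssim -n\gamma/\sigma^{2\epsilon/(\epsilon+1)}$ when $\epsilon>1$, matching $\Theta(n,\gamma,\sigma)$. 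The standard tool here is either Bernstein's inequality applied after the usual " $t/\sqrt{\mathbb{E}\xi+2\gamma}$ " reparametrization, or a direct appeal to a lemma of the type in Cucker–Zhou / Wu–Zhou; I would cite such a lemma rather than reprove it.

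Then I would pass from a fixed $f$ to the supremum over $\mathcal{H}$ by discretization. Take an $\eta$-net $\{f_1,\dots,f_N\}$ of $\mathcal{H}$ with $N = \mathcal{N}(\mathcal{H},\eta)$ and $\eta = \gamma/(2\sigma)$. For $f\in B(f_j,\eta)$, the $2\sigma$-Lipschitz property of $\ell_\sigma$ gives $\|\xi_f - \xi_{f_j}\|_\infty \le 2\sigma\|f-f_j\|_\infty \le 2\sigma\eta = \gamma$, hence $|\mathbb{E}\xi_f - \mathbb{E}\xi_{f_j}| \le \gamma$ and $|\frac{1}{n}\sum\xi_f - \frac{1}{n}\sum\xi_{f_j}| \le \gamma$; combined with $\sqrt{\mathcal{R}^\sigma(f)-\mathcal{R}^\sigma(f^\star)+2\gamma} \ge \sqrt{2\gamma} \ge \sqrt\gamma$, these perturbations change the ratio by a bounded amount, so a bound of $2\sqrt\gamma$ at the net points yields $4\sqrt\gamma$ on all of $\mathcal{H}$ (the constant $2$ is adjustable by tuning the net radius and the per-point deviation threshold). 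A union bound over the $N$ centers then gives the factor $\mathcal{N}(\mathcal{H},\gamma/(2\sigma))$ in front of $e^{-\Theta(n,\gamma,\sigma)}$, completing the proof.

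The main obstacle I anticipate is the bookkeeping in the variance-to-Bernstein conversion for the regime $\epsilon > 1$: one must carefully exploit the upper bound $\|f-f^\star\|_{2,\rho}\le 2M$ together with $\|f-f^\star\|_{2,\rho}^2 \le \mathbb{E}\xi_f + 2\gamma$ to turn the sub-linear power $\|f-f^\star\|_{2,\rho}^{2(\epsilon-1)/(\epsilon+1)}$ into a genuine linear-in-$(\mathbb{E}\xi_f+2\gamma)$ factor times the correct power of $\sigma$ (namely $\sigma^{2\epsilon/(\epsilon+1)}$), and simultaneously verify that the pointwise bound $B\lesssim\sigma M$ does not dominate this variance term — i.e.\ that $\sigma M \lesssim \sigma^{2\epsilon/(\epsilon+1)}$, which holds precisely because $2\epsilon/(\epsilon+1) > 1$ for $\epsilon > 1$. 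Getting the exponents to line up exactly with the stated $\Theta(n,\gamma,\sigma)$, and tracking that all constants $c_1', c_2'$ are independent of $n,\gamma,\sigma$, is the delicate part; everything else is routine.
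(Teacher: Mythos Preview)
Your approach is correct and matches the paper's: Bernstein's inequality applied to $\xi_f$ at a $\gamma/(2\sigma)$-net, with the variance controlled via Theorem~\ref{variance_estimate} and the comparison $\mathbb{E}\xi_f+2\gamma\ge\max(\|f-f^\star\|_{2,\rho}^2,\,\gamma)>c_\epsilon\sigma^{-\epsilon}$ from Theorem~\ref{theorem_comparison}, then extended to all of $\mathcal H$ through the $2\sigma$-Lipschitz perturbation estimate. Two minor cleanups: for $0<\epsilon\le 1$ the first variance term $c_1\|f-f^\star\|_{2,\rho}^{0}=c_1$ is a constant absorbed by $\sigma^{1-\epsilon}\ge 1$ rather than vanishing, and for $\epsilon>1$ the bound $\|f-f^\star\|_{2,\rho}\le 2M$ is not needed --- the lower bound $\mathbb{E}\xi_f+2\gamma>c_\epsilon\sigma^{-\epsilon}$ alone gives $(\mathbb{E}\xi_f+2\gamma)^{-2/(\epsilon+1)}\lesssim\sigma^{2\epsilon/(\epsilon+1)}$, which is exactly the paper's route.
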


To prove Proposition \ref{ratio_inequality}, we need the following Bernstein concentration inequality that is frequently employed in the literature of learning theory.

\begin{lemma}\label{oneside-bernstein}
	Let $\xi$ be a random variable on a probability space $\mathcal{Z}$
	with variance $\sigma_\star^2$ satisfying $|\xi-\mathbb{E}\xi|\leq
	M_{\xi}$ almost surely for some constant $M_{\xi}$ and for all
	$z\in\mathcal{Z}$. Then for all $\lambda>0$, 
	\begin{align*}
	\Pr 
	\left\{\frac{1}{n}\sum_{i=1}^n
	\xi(z_i)-\mathbb{E}\xi\geq \lambda\right\}\leq
	\exp\left\{-\frac{n
		\lambda^2}{2(\sigma_\star^2+\frac{1}{3}M_\xi\lambda)}\right\}.
	\end{align*}
\end{lemma}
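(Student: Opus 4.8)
The plan is to establish this classical Bernstein inequality by the Chernoff (exponential-moment) method. First I would recenter the summands: set $\eta := \xi - \mathbb{E}\xi$, so that $\mathbb{E}\eta = 0$, $\mathbb{E}\eta^2 = \sigma_\star^2$, and $|\eta|\le M_\xi$ almost surely. Rewriting the event in terms of the $\eta(z_i)$ and applying Markov's inequality to $\exp\bigl(t\sum_i \eta(z_i)\bigr)$, for every $t>0$ one gets
\begin{align*}
\Pr\left\{\frac{1}{n}\sum_{i=1}^n \eta(z_i) \ge \lambda\right\} \le e^{-tn\lambda}\,\mathbb{E}\left[\exp\left(t\sum_{i=1}^n \eta(z_i)\right)\right] = e^{-tn\lambda}\left(\mathbb{E}\,e^{t\eta}\right)^n,
\end{align*}
where the factorization in the last equality uses the independence of the samples $z_1,\dots,z_n$. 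This reduces the whole problem to a sharp upper bound on the moment generating function $\mathbb{E}\,e^{t\eta}$ of a single centered, bounded summand.

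The key step is this moment generating function estimate. Expanding the exponential and using $\mathbb{E}\eta=0$ yields $\mathbb{E}\,e^{t\eta} = 1 + \sum_{k\ge 2}\frac{t^k\mathbb{E}\eta^k}{k!}$. The moments are controlled by combining boundedness with the variance: for $k\ge 2$,
\begin{align*}
|\mathbb{E}\eta^k| \le \mathbb{E}\left(\eta^2 |\eta|^{k-2}\right) \le M_\xi^{k-2}\,\mathbb{E}\eta^2 = \sigma_\star^2 M_\xi^{k-2}.
\end{align*}
Inserting this bound and using the elementary inequality $k!\ge 2\cdot 3^{k-2}$ for $k\ge 2$, I would sum the resulting geometric series to obtain, for all $t$ with $tM_\xi<3$,
\begin{align*}
\mathbb{E}\,e^{t\eta} \le 1 + \frac{\sigma_\star^2 t^2}{2}\sum_{k\ge 2}\left(\frac{tM_\xi}{3}\right)^{k-2} = 1 + \frac{\sigma_\star^2 t^2}{2\left(1 - tM_\xi/3\right)} \le \exp\left(\frac{\sigma_\star^2 t^2}{2\left(1 - tM_\xi/3\right)}\right),
\end{align*}
where the final step is $1+x\le e^x$.

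Combining the two displays gives $\Pr\{\frac{1}{n}\sum_i \eta(z_i)\ge\lambda\} \le \exp\left(-tn\lambda + \frac{n\sigma_\star^2 t^2}{2(1-tM_\xi/3)}\right)$ for every admissible $t$, so it remains only to optimize the exponent over $t$. Rather than differentiating, I would substitute the near-optimal choice $t = \lambda/(\sigma_\star^2 + M_\xi\lambda/3)$, which automatically satisfies $tM_\xi<3$; a short calculation then collapses the exponent exactly to $-n\lambda^2/\bigl(2(\sigma_\star^2 + \tfrac{1}{3}M_\xi\lambda)\bigr)$, which is the claimed bound. Since this is a classical argument there is no genuine obstacle; the one point requiring care is the moment generating function bound itself, \emph{i.e.} verifying the combinatorial inequality $k!\ge 2\cdot 3^{k-2}$ that tames the tail of the Taylor series and pins the convergence region to exactly the range $tM_\xi<3$ needed for the final optimization to go through.
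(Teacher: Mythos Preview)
Your argument is the standard Chernoff/Cram\'er derivation of Bernstein's inequality and is correct as written; the moment bound $|\mathbb{E}\eta^k|\le \sigma_\star^2 M_\xi^{k-2}$, the factorial estimate $k!\ge 2\cdot 3^{k-2}$, and the final substitution $t=\lambda/(\sigma_\star^2+M_\xi\lambda/3)$ all check out and collapse the exponent exactly as claimed. Note, however, that the paper does not actually prove this lemma: it is quoted as a well-known concentration inequality from the learning-theory literature and used as a black box in the proof of Proposition~\ref{ratio_inequality}. So there is no ``paper's proof'' to compare against; your write-up simply supplies the omitted classical argument.
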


\begin{proof}[Proof of Proposition \ref{ratio_inequality}.]
Recall that $\mathcal{F}_\mathcal{H}$ denotes the following set of  random variables 
\begin{align*}
\mathcal{F}_\mathcal{H}=\Big\{\xi\,|\,\xi(x,y)=\ell_\sigma(y-f(x))-\ell_\sigma(y-f^\star(x)),\,\,f\in\mathcal{H},\,\, (x,y)\in \mathcal{X}\times\mathcal{Y}\Big\}.
\end{align*}
For each $\xi\in\mathcal F_{\mathcal H}$, by the fact that the Huber loss \eqref{huber_loss}  is Lipschitz continuous with Lipschitz constant $2\sigma$, we have
\begin{align*}
\|\xi\|_\infty\leq
2\sigma \|f-f^\star\|_\infty\leq 4M \sigma \,\,\,\hbox{and}\,\,\, \|\xi-\mathbb{E}\xi\|_\infty\leq
4\sigma \|f-f^\star\|_\infty\leq 8M \sigma. 
\end{align*}
According to Theorem \ref{variance_estimate}, we know that 
\begin{align*}
\mathbb{E}\xi^2\leq c_1\|f-f^\star\|_{2,\rho}^{\frac{2(\epsilon-1)_+}{\epsilon+1}} + c_2\sigma^{1-\epsilon}.    
\end{align*}

By Assumption \ref{complexity_assumption}, we know that there exist a finite positive integer $J=\mathcal{N}(\mathcal{H},\frac{\gamma}{2\sigma})$ and $\{f_j\}_{j=1}^J\subset \mathcal{H}$ such that $B(f_j, \eta), j=1,\ldots,J$ form a $\frac{\gamma}{2\sigma}$-cover of $\mathcal{H}$. We next show that for each $j=1,\cdots,J$, it holds that 
\begin{align}\label{temp_ratio_inequ}
&\Pr 
\left\{\frac{\big|[\mathcal{R}^{\sigma}(f_j)-\mathcal{R}^{\sigma}(f^\star)]-
	[\mathcal{R}_{\mathbf{z}}^\sigma(f_j)-\mathcal{R}_{\mathbf{z}}^\sigma(f^\star)]\big|}
{\sqrt{\mathcal{R}^{\sigma}(f_j)-\mathcal{R}^{\sigma}(f^\star)+2\gamma}}>\sqrt{\gamma}\right \}\leq   e^{-\Theta(n,\gamma, \sigma)}
\end{align}
for $\gamma > \frac{c_\mathsmaller{\epsilon}}{\sigma^{\epsilon}}.$
To see this, we apply the Bernstein inequality in Lemma \ref{oneside-bernstein} to the following random variables
\begin{align*}
\xi_j(x,y)=\ell_\sigma(y-f_j(x))-
\ell_\sigma(y-f^\star(x)),\, (x,y)\in\mathcal{X}\times\mathcal{Y}, \,j=1,\ldots, J,
\end{align*}
and obtain
\begin{align}
&\Pr
\left\{\big|[\mathcal{R}^{\sigma}(f_j)-\mathcal{R}^{\sigma}(f^\star)]-
[\mathcal{R}_{\mathbf{z}}^\sigma(f_j)-\mathcal{R}_{\mathbf{z}}^\sigma(f^\star)]\big|
>\sqrt{\gamma}\sqrt{\mathcal{R}^{\sigma}(f_j)-\mathcal{R}^{\sigma}(f^\star)+2\gamma}\right\} \nonumber \\[0.5em]
\leq\ & \Pr
\left\{\big|[\mathcal{R}^{\sigma}(f_j)-\mathcal{R}^{\sigma}(f^\star)]-
[\mathcal{R}_{\mathbf{z}}^\sigma(f_j)-\mathcal{R}_{\mathbf{z}}^\sigma(f^\star)]\big|
>\mu_j\sqrt{\gamma}\right\} \nonumber \\[0.5em]
\leq\ &\exp\left\{-\frac{n\gamma\mu_j^2}{(8M/3+c_1+c_2)\left(\sqrt{\gamma}\mu_j\sigma+
	\sigma^{1-\epsilon}+\|f_j-f^\star\|_{2,\rho}^{\frac{2(\epsilon-1)_+}{\epsilon+1}}\right)}\right\},
\label{temp_raito_ii}
\end{align}
where $\mu_j^2:=\mathcal{R}^{\sigma}(f_j)-\mathcal{R}^{\sigma}(f^\star)+2\gamma$. 
Since $\gamma> \frac{  c_\mathsmaller{\epsilon}}{\sigma^{\epsilon}},$ by Theorem \ref{theorem_comparison}, we have for $j=1,\ldots,J$, 
\begin{align}
\mu_j^2&=\mathcal{R}^{\sigma}(f_j)-\mathcal{R}^{\sigma}(f^\star)+2\gamma \nonumber \\
& >
\mathcal{R}^{\sigma}(f_j)-\mathcal{R}^{\sigma}(f^\star)+c_\mathsmaller{\epsilon}\sigma^{-\epsilon}+\gamma \nonumber \\
& \ge  \|f_j-f^\star\|_{2,\rho}^2+\gamma\geq
\gamma.
\label{mu_j}
\end{align}
We proceed with the proof by considering the two cases when $0<\epsilon\leq 1$ and when $\epsilon>1$. If $0<\epsilon\leq 1$, by the assumption $\sigma>1$ and \eqref{mu_j}, we have
\begin{align*}
\hspace{-3cm}& \frac{n\gamma\mu_j^2}{(8M/3+c_1+c_2)\left(\sqrt{\gamma}\mu_j\sigma+
	\sigma^{1-\epsilon}+\|f_j-f^\star\|_{2,\rho}^{\frac{2(\epsilon-1)_+}{\epsilon+1}}\right)}\\
 >\  &   \frac{n\gamma\mu_j^2}{2(8M/3+c_1+c_2)\left(\sqrt{\gamma}\mu_j\sigma+
	\sigma^{1-\epsilon}\right)} > \frac{n\gamma}{c_1^\prime \sigma},
\end{align*}
where $c_1^\prime=2(1+c_\epsilon^{-1})(8M/3+c_1+c_2).$
If $\epsilon>1$, note that \eqref{mu_j} implies
 $\mu_j^2 > \|f_j-f^\star\|_{2,\rho}^2$ and  $\mu_j^2 > \gamma > c_\epsilon\sigma^{-\epsilon}$.
We have 
\begin{align*}
\hspace{-2cm}& \frac{n\gamma\mu_j^2}{(8M/3+c_1+c_2)\left(\sqrt{\gamma}\mu_j\sigma+
	\sigma^{1-\epsilon}+\|f_j-f^\star\|_{2,\rho}^{\frac{2(\epsilon-1)_+}{\epsilon+1}}\right)}\\
=\  &   \frac{n\gamma\mu_j^2}{(8M/3+c_1+c_2)\left(\sqrt{\gamma}\mu_j\sigma+
	\sigma^{1-\epsilon}+\|f_j-f^\star\|_{2,\rho}^{\frac{2(\epsilon-1)}{\epsilon+1}}\right)}\\
>\  & \frac{n\gamma\mu_j^2}{(8M/3+c_1+c_2)\left(\sqrt{\gamma}\mu_j\sigma+
	\sigma^{1-\epsilon}+\mu_j^{\frac{2(\epsilon-1)}{\epsilon+1}}\right)} >  \cfrac{n\gamma}{c_2^\prime\sigma^{\frac{2\epsilon}{\epsilon+1}}},
\end{align*}
where $c_2^\prime= (8M/3+c_1+c_2)\left(1+c_\epsilon^{-1}+c_\epsilon^{-\frac{2}{\epsilon+1}}\right)$. Combining the above estimates for two cases and recalling \eqref{temp_raito_ii}, we thus have proved the result in \eqref{temp_ratio_inequ}.

Denote the two events $A$ and, respectively, as
\begin{align*} 
A = \left\{ \sup_{f\in\mathcal{H}}
\frac{\big|[\mathcal{R}^{\sigma}(f)-\mathcal{R}^{\sigma}(f^\star)]-
	[\mathcal{R}_{\mathbf{z}}^\sigma(f)-\mathcal{R}_{\mathbf{z}}^\sigma(f^\star)]\big|}
{\sqrt{\mathcal{R}^{\sigma}(f)-\mathcal{R}^{\sigma}(f^\star)+2\gamma}} \le 
4\sqrt{\gamma } \right\} 
\end{align*} 
and 
\begin{align*} 
B = 
 \bigcap _{j=1}^J \left\{\frac{\big|[\mathcal{R}^{\sigma}(f_j)-\mathcal{R}^{\sigma}(f^\star)]-
	[\mathcal{R}_{\mathbf{z}}^\sigma(f_j)-\mathcal{R}_{\mathbf{z}}^\sigma(f^\star)]\big|}
{\sqrt{\mathcal{R}^{\sigma}(f_j)-\mathcal{R}^{\sigma}(f^\star)+2\gamma}} \le \sqrt{\gamma}\right\}.
\end{align*}
We next prove $B\subset A.$ 
To this end, we assume that the event $B$ occurs, that is, for all $j=1, \ldots, J$, 
\begin{align}
\big|[\mathcal{R}^{\sigma}(f_j)-\mathcal{R}^{\sigma}(f^\star)]-
[\mathcal{R}_{\mathbf{z}}^\sigma(f_j)-\mathcal{R}_\mathbf{z}^{\sigma}(f^\star)]\big|
\le \sqrt{\gamma \left(\mathcal{R}^{\sigma}(f_j)-\mathcal{R}^{\sigma}(f^\star)+2\gamma\right)}.
\label{eq:fj}
\end{align}
Recall $B(f_j, \eta), j=1, \ldots, J,$ is a  $\frac{\gamma}{2\sigma}$-cover of $\mathcal{H}$. For every $f\in\mathcal{H}$, there exists $f_j\in\mathcal{H}$ such that
$\|f-f_j\|_\infty\leq  \frac{ \gamma}{2\sigma}$.
Since $\gamma > {c_\epsilon}{\sigma^{-\epsilon}},$ we have
\begin{align}\label{temp_3}
\mathcal{R}^\sigma(f)-\mathcal{R}^\sigma(f^\star)+2\gamma\geq \|f-f^\star\|_{2,\rho}^2-c_\mathsmaller{\epsilon}\sigma^{-\epsilon}+2\gamma\geq \gamma.
\end{align} 
Therefore, by the Lipschitz continuity of the Huber loss, for each $j=1,\dots,J$, we have
\begin{align}\label{temp_1}
|\mathcal{R}^\sigma(f)-\mathcal{R}^\sigma(f_j)|\leq 2\sigma\|f-f_j\|_\infty\leq \gamma 
\le \sqrt{\gamma \left(\mathcal{R}^{\sigma}(f)-\mathcal{R}^{\sigma}(f^\star)+2\gamma\right)}, 
\end{align}
and 
\begin{align}\label{temp_2}
|\mathcal{R}_{\mathbf{z}}^\sigma(f)-\mathcal{R}_{\mathbf{z}}^\sigma(f_j)|\leq 2\sigma\|f-f_j\|_\infty\leq \gamma
\sqrt{\gamma \left(\mathcal{R}^{\sigma}(f)-\mathcal{R}^{\sigma}(f^\star)+2\gamma\right)}. 
\end{align}
By \eqref{temp_1} and \eqref{temp_3}, we also have 
\begin{align}
\mathcal{R}^{\sigma}(f_j)-\mathcal{R}^{\sigma}(f^\star)+2\gamma &=
\Big (\mathcal{R}^{\sigma}(f_j)-\mathcal{R}^{\sigma}(f) \Big) +
\mathcal{R}^{\sigma}(f)-\mathcal{R}^{\sigma}(f^\star)+2\gamma \nonumber \\
&\leq\sqrt{\gamma\left(\mathcal{R}^{\sigma}(f)-\mathcal{R}^{\sigma}(f^\star)+2\gamma\right)}
+\mathcal{R}^{\sigma}(f)-\mathcal{R}^{\sigma}(f^\star)+2\gamma \nonumber \\
&\leq 2(\mathcal{R}^{\sigma}(f)-\mathcal{R}^{\sigma}(f^\star)+2\gamma).
\label {eq:temp_4}
\end{align}
Combining the estimates \eqref{temp_1}, \eqref{temp_2}, 
\eqref{eq:temp_4} with the assumption \eqref{eq:fj}, we obtain 
\begin{align}
& \big|[\mathcal{R}^{\sigma}(f)-\mathcal{R}^{\sigma}(f^\star)]-
[\mathcal{R}_{\mathbf{z}}^\sigma(f)-\mathcal{R}_\mathbf{z}^{\sigma}(f^\star)]\big| \nonumber \\[1ex] 
\le\ & | \mathcal{R}^{\sigma}(f) - \mathcal{R}^{\sigma}(f_j) | 
+ \big |\mathcal{R}^{\sigma}(f_j)   -\mathcal{R}^{\sigma}(f^\star)]-
[\mathcal{R}_{\mathbf{z}}^\sigma(f_j)-\mathcal{R}_\mathbf{z}^{\sigma}(f^\star)]\big|
+ | \mathcal{R}_{\mathbf{z}}^\sigma(f_j) -\mathcal{R}_{\mathbf{z}}^\sigma(f) | \nonumber \\[1ex]
\le\ & 
2 \sqrt{\gamma \left(\mathcal{R}^{\sigma}(f)-\mathcal{R}^{\sigma}(f^\star)+2\gamma\right)} +\sqrt{\gamma \left(\mathcal{R}^{\sigma}(f_j)-\mathcal{R}^{\sigma}(f^\star)+2\gamma\right)} \nonumber \\[1ex] 
\le \ & 4\sqrt{\gamma \left(\mathcal{R}^{\sigma}(f)-\mathcal{R}^{\sigma}(f^\star)+2\gamma\right)}.
\label{eq:f}
\end{align}
Since \eqref{eq:f} holds for every $f\in\mathcal H,$ we have proved $B\subset A$ or equivalently $A^c\subset B^c.$ This together with \eqref{temp_ratio_inequ}
 leads to 
 \begin{align*}
&\Pr
\left\{\sup_{f\in\mathcal{H}}
\frac{\big|[\mathcal{R}^{\sigma}(f)-\mathcal{R}^{\sigma}(f^\star)]-
	[\mathcal{R}_{\mathbf{z}}^\sigma(f)-\mathcal{R}_{\mathbf{z}}^\sigma(f^\star)]\big|}
{\sqrt{\mathcal{R}^{\sigma}(f)-\mathcal{R}^{\sigma}(f^\star)+2\gamma}}>
4\sqrt{\gamma } \right\}\\[0.5em]
= \ & \Pr(A^c) \le \Pr(B^c) \\[0.5em]  
\le \ &
\sum_{j=1}^J \Pr 
\left\{\frac{\big|[\mathcal{R}^{\sigma}(f_j)-\mathcal{R}^{\sigma}(f^\star)]-
	[\mathcal{R}_{\mathbf{z}}^\sigma(f_j)-\mathcal{R}_{\mathbf{z}}^\sigma(f^\star)]\big|}
{\sqrt{\mathcal{R}^{\sigma}(f_j)-\mathcal{R}^{\sigma}(f^\star)+2\gamma}}>\sqrt{\gamma}\right\}\\[0.5em]
\leq\ & 
\mathcal{N}\left(\mathcal{H}, \gamma\sigma^{-1}/2 \right) e^{-\Theta(n,\gamma,\sigma)}.
\end{align*}
This completes the proof of Proposition \ref{ratio_inequality}. 
\end{proof}

\subsubsection{Proof of the Theorem}\label{subsec:pf}

We first prove that for any $0<\delta<1$, with probability at least $1-\delta/2$, we have
\begin{align*}
[\mathcal{R}^{\sigma}(f_{\mathbf{z},\sigma})-\mathcal{R}^{\sigma}(f^\star)]-
[\mathcal{R}_{\mathbf{z}}^\sigma(f_{\mathbf{z},\sigma})-\mathcal{R}_{\mathbf{z}}^\sigma(f^\star)]- \frac{1}{2}[\mathcal{R}^{\sigma}(f_{\mathbf{z},\sigma})-\mathcal{R}^{\sigma}(f^\star)]\leq 9\gamma_0,
\end{align*}
where $\gamma_0$ is given by
\begin{align}
\gamma_0:= 
\begin{cases}
\mathcal{O}\left(\frac{1}{\sigma^{\epsilon}}+\log\left(\frac{2}{\delta}\right)\frac{\sigma}{n^{1/(q+1)}}\right),&\,\,\hbox{if}\,\,0<\epsilon\leq 1,\\[1em]
\mathcal{O}\left(\frac{1}{\sigma^{\epsilon}}+\log\left(\frac{2}{\delta}\right)\left(\frac{\sigma^{q+\frac{2\epsilon}{1+\epsilon}}}{n}\right)^{1/(q+1)}\right),&\,\,\hbox{if}\,\,\epsilon > 1.
\end{cases}
\label{eq:gamma0} 
\end{align} 
Note that Proposition \ref{ratio_inequality} implies that, for any $\gamma> c_\epsilon \sigma^{-\epsilon}$,
\begin{align} 
\sup_{f\in\mathcal{H}}
\frac{\big|[\mathcal{R}^{\sigma}(f)-\mathcal{R}^{\sigma}(f^\star)]-
	[\mathcal{R}_{\mathbf{z}}^\sigma(f)-\mathcal{R}_{\mathbf{z}}^\sigma(f^\star)]\big|}
{\sqrt{\mathcal{R}^{\sigma}(f)-\mathcal{R}^{\sigma}(f^\star)+2\gamma}}< 
4\sqrt{\gamma }
\label{eq:allf}
\end{align} 
holds with probability at least  $$1- \mathcal{N}\left(\mathcal{H}, \frac{\gamma}{2 \sigma} \right)e^{-\Theta(n, \gamma,\sigma)}.$$ 
We know from Assumption \ref{complexity_assumption} that
\begin{align*}
\mathcal{N}\left(\mathcal{H}, \frac{\gamma}{2\sigma} \right) \lesssim
\exp\left\{2^q c\sigma^q/\gamma^q\right\}.
\end{align*}
Consequently, the event \eqref{eq:allf} holds 
with probability at least 
$1-\exp\{2^q c\sigma^q \gamma^{-q} - \Theta(n,\gamma,\sigma)\}$. 
For any $0<\delta<1$, let 
$$\exp\{2^q c\sigma^q \gamma^{-q} - \Theta(n,\gamma,\sigma)\}= \delta/2,$$ or equivalently 
\begin{align*}
2^q c \sigma^q\gamma^{-q}-\Theta(n,\gamma,\sigma)=\log(\delta/2).
\end{align*}
The equation has a unique positive solution $\gamma^*$ satisfying 
\begin{align*}
\gamma^\star\lesssim 
\begin{cases} 
\log\left(\frac{2}{\delta}\right)\left(\frac{\sigma}{n^{1/(q+1)}}\right), & \hbox{ if } \epsilon\le 1, \\[1ex]
\log\left(\frac{2}{\delta}\right)\left(\frac{\sigma^{q+\frac{2\epsilon}{1+\epsilon}}}{n}\right)^{1/(q+1)}, 
& \hbox{ if } \epsilon>1.
\end{cases}
\end{align*}
Choose $\gamma_0=c_\epsilon\sigma^{-\epsilon}+\gamma^\star.$ Then $\gamma_0$ satisfies the condition \eqref{eq:gamma0} and for any $0<\delta<1$, with probability at least $1-\delta/2$, it holds that
\begin{align*} 
\sup_{f\in\mathcal{H}}
\frac{\big|[\mathcal{R}^{\sigma}(f)-\mathcal{R}^{\sigma}(f^\star)]-
	[\mathcal{R}_{\mathbf{z}}^\sigma(f)-\mathcal{R}_{\mathbf{z}}^\sigma(f^\star)]\big|}
{\sqrt{\mathcal{R}^{\sigma}(f)-\mathcal{R}^{\sigma}(f^\star)+2\gamma_0}} \le 
4\sqrt{\gamma_0},
\end{align*}
which immediately yields
\begin{align}\label{sample_error_I}
[\mathcal{R}^{\sigma}(f_{\mathbf{z},\sigma})-\mathcal{R}^{\sigma}(f^\star)]-
[\mathcal{R}_{\mathbf{z}}^\sigma(f_{\mathbf{z},\sigma})-\mathcal{R}_{\mathbf{z}}^\sigma(f^\star)]& \leq
4\sqrt{\gamma_0}\sqrt{\mathcal{R}^{\sigma}(f_{\mathbf{z},\sigma})-\mathcal{R}^{\sigma}(f^\star)+2\gamma_0} \nonumber \\[1ex] 
& \le \frac 12 \Big ( \mathcal{R}^{\sigma}(f_{\mathbf{z},\sigma})-\mathcal{R}^{\sigma}(f^\star) \Big) + 9 \gamma_0.
\end{align}
By a similar procedure 
we can prove that for any $0<\delta<1$, with probability at least $1-\delta/2$, it holds that
\begin{align*}
[\mathcal{R}_{\mathbf{z}}^\sigma(f_{\mathcal{H},\sigma})-\mathcal{R}_{\mathbf{z}}^\sigma(f^\star)]-[\mathcal{R}^\sigma(f_{\mathcal{H},\sigma})-\mathcal{R}^\sigma(f^\star)]& \le \frac{1}{2}\Big( \mathcal{R}^{\sigma}(f_{\mathcal{H},\sigma})-\mathcal{R}^{\sigma}(f^\star)\Big) +  9\gamma_0. 
\end{align*}
This in connection with the fact that $\mathcal{R}^\sigma(f_{\mathcal{H},\sigma})\leq \mathcal{R}^\sigma(f_\mathcal{H})$ and Theorem \ref{theorem_comparison} implies that
for any $0<\delta<1$, with probability at least $1-\delta/2$, we have
\begin{align}
[\mathcal{R}_{\mathbf{z}}^\sigma(f_{\mathcal{H},\sigma})-\mathcal{R}_{\mathbf{z}}^\sigma(f^\star)]-[\mathcal{R}^\sigma(f_{\mathcal{H},\sigma})-\mathcal{R}^\sigma(f^\star)]& \le \frac{1}{2}\| f_{\mathcal{H}}- f^\star\|_{2,\rho}^2  +  10\gamma_0. 
\label{sample_error_II}
\end{align}

Combining the two estimates in \eqref{sample_error_I} and \eqref{sample_error_II}, we come to the conclusion that for any $0<\delta<1$, with probability at least $1-\delta$, it holds that 
\begin{align}
& [\mathcal{R}^\sigma(f_{\mathbf{z},\sigma})-\mathcal{R}^\sigma(f_{\mathcal{H},\sigma})]-[\mathcal{R}_{\mathbf{z}}^\sigma(f_{\mathbf{z},\sigma})-\mathcal{R}_{\mathbf{z}}^\sigma(f_{\mathcal{H},\sigma})]  \nonumber \\[1ex]
\leq & \frac{1}{2}\big[\mathcal{R}^\sigma(f_{\mathbf{z},\sigma})-\mathcal{R}^\sigma(f^\star)\big]+ \frac{1}{2}\|f_\mathcal{H}-f^\star\|_{2,\rho}^2+19 \gamma_0.
\label{eq:1/2bound}
\end{align}

On the other hand, from the definitions of $f_{\mathcal{H},\sigma}$, $f_\mathcal{H}$, and $f_{\mathbf{z},\sigma}$, we have 
\begin{align*}
&\mathcal{R}^\sigma(f_{\mathbf{z},\sigma})-\mathcal{R}^\sigma(f^\star)\\[1ex]
=&[\mathcal{R}^\sigma(f_{\mathbf{z},\sigma})-\mathcal{R}^\sigma(f_{\mathcal{H},\sigma})]+[\mathcal{R}^\sigma(f_{\mathcal{H},\sigma})-\mathcal{R}^\sigma(f^\star)]\\[1ex]
\leq& [\mathcal{R}^\sigma(f_{\mathbf{z},\sigma})-\mathcal{R}^\sigma(f_{\mathcal{H},\sigma})]-[\mathcal{R}_{\mathbf{z}}^\sigma(f_{\mathbf{z},\sigma})-\mathcal{R}_{\mathbf{z}}^\sigma(f_{\mathcal{H},\sigma})]+\mathcal{R}^\sigma(f_\mathcal{H})-\mathcal{R}^\sigma(f^\star)\\[1ex]
\leq& [\mathcal{R}^\sigma(f_{\mathbf{z},\sigma})-\mathcal{R}^\sigma(f_{\mathcal{H},\sigma})]-[\mathcal{R}_{\mathbf{z}}^\sigma(f_{\mathbf{z},\sigma})-\mathcal{R}_{\mathbf{z}}^\sigma(f_{\mathcal{H},\sigma})]+ \|f_\mathcal{H}-f^\star\|_{2,\rho}^2+c_\epsilon\sigma^{-\epsilon},
\end{align*}
where the first inequality is due to the following two facts
\begin{align*}
\mathcal{R}_{\mathbf{z}}^\sigma(f_{\mathbf{z},\sigma})\leq \mathcal{R}_{\mathbf{z}}^\sigma(f_{\mathcal{H},\sigma}),\quad\hbox{and}\quad\mathcal{R}^\sigma(f_{\mathcal{H},\sigma})\leq \mathcal{R}^\sigma(f_\mathcal{H}).
\end{align*}
By \eqref{eq:1/2bound}, we know that for any $0<\delta<1$, with probability at least $1-\delta$, it holds that 
\begin{align*}
    \mathcal{R}^\sigma(f_{\mathbf{z},\sigma})-\mathcal{R}^\sigma(f^\star) \le \frac 12 \Big( \mathcal{R}^\sigma(f_{\mathbf{z},\sigma})-\mathcal{R}^\sigma(f^\star)\Big) + \frac 32 \|f_{\mathcal H} - f^*\|_{2,\rho}^2 + 20 \gamma_0,
\end{align*}
which implies that 
\begin{align*}
    \mathcal{R}^\sigma(f_{\mathbf{z},\sigma})-\mathcal{R}^\sigma(f^\star) \leq  3 \|f_{\mathcal H} - f^*\|_{2,\rho}^2 + 40 \gamma_0
\end{align*}
holds with probability at least $1-\delta$. By Theorem \ref{theorem_comparison} again, we conclude that for any $0<\delta<1$, with probability at least $1-\delta$, it holds that  
\begin{align*}
\|f_{\mathbf{z},\sigma}-f^\star\|_{2,\rho}^2\lesssim \|f_\mathcal{H}-f^\star\|_{2,\rho}^2+\gamma_0.
\end{align*}
Recalling the definition of $\Psi$ and noticing that $\gamma_0 \lesssim \log(\delta/2 ) \Psi$, we  complete the proof of Theorem \ref{thm::convergence_rates}. 

\section{Concluding Remarks}\label{sec:conclusion}
In this paper, we studied the Huber regression problem by investigating the empirical risk minimization scheme induced by the Huber loss. In a statistical learning setup, our study answered the four fundamental questions raised in the introduction: the $\mathcal{R}^\sigma$-risk consistency is insufficient in ensuring their convergence to the mean regression function; the scale parameter $\sigma$ plays a trade-off role in bias and learnability; fast exponential-type convergence rates can be established under $(1+\epsilon)$-moment conditions ($\epsilon>0$) by relaxing the standard Bernstein condition and allowing some additional small bias term; the merit of Huber regression in terms of the robustness can be reflected by its learnability under the $(1+\epsilon)$-moment conditions which are considered to be weak conditions in that heavy-tailed noise can be accommodated in regression problems. Moreover, it was shown that with higher moment conditions being imposed, one can obtain faster convergence rates. In the above senses, we conducted a complete and systematic statistical learning assessment of Huber regression estimators.  

We remark that in the present study a general hypothesis space $\mathcal{H}$ is considered. In practice, the implementation of learning with Huber regression requires to specify a particular hypothesis space. It can be a reproducing kernel Hilbert space, a neural network, or other families of functions. Functions in such a hypothesis space are generally not uniformly bounded. Regularization could be used to restrict the searching region of the Huber regression scheme and consequently controls the capacity of the working hypothesis space. The techniques developed in this study may still be applicable to assessing the regularized Huber regression schemes. Additionally, the development of these techniques for assessing Huber regression estimators may also shed light on the analysis of other robust regression schemes.

\section*{Acknowledgement}
This work was partially supported by the Simons Foundation Collaboration Grant \#572064 (YF) and \#712916 (QW). The two authors made equal contributions to this paper and are listed alphabetically. 

\bibliographystyle{abbrv}
\bibliography{FW2020a}
\end{document}